\definecolor{red}{RGB}{250,25,25}
\definecolor{blue}{RGB}{50,50,200}
\newtheorem{theorem}{Theorem}[section]
\crefname{theorem}{Theorem}{Theorems}
\newtheorem{lemma}[theorem]{Lemma}
\crefname{lemma}{Lemma}{Lemmas}
\newtheorem{proposition}[theorem]{Proposition}
\crefname{proposition}{Proposition}{Propositions}
\crefname{prop}{Proposition}{Propositions}
\newtheorem{corollary}[theorem]{Corollary}
\crefname{corollary}{Corollary}{Corollaries}
\crefname{cor}{Corollary}{Corollaries}
\newtheorem{conjecture}[theorem]{Conjecture}
\crefname{conjecture}{Conjecture}{Conjectures}
\crefname{conj}{Conjecture}{Conjectures}
\newtheorem*{conj*}{Conjecture}
\crefname{conj}{Conjecture}{Conjectures}
\theoremstyle{definition}
\newtheorem{definition}[theorem]{Definition}
\crefname{definition}{Definition}{Definitions}
\crefname{defn}{Definition}{Definitions}
\newtheorem{example}[theorem]{Example}
\crefname{example}{Example}{Examples}
\crefname{notation}{Notation}{Notation}
\newtheorem*{notation*}{Notation}
\crefname{notation}{Notation}{Notation}
\crefname{problem}{Problem}{Problems}
\crefname{question}{Question}{Questions}
\crefname{condition}{Condition}{Conditions}
\crefname{assumption}{Assumption}{Assumptions}
\theoremstyle{remark}
\crefname{rmk}{Remark}{Remarks}
\newtheorem*{rmk*}{Remark}
\crefname{rmk}{Remark}{Remarks}
\newtheorem{remark}[theorem]{Remark}
\crefname{remark}{Remark}{Remarks}
\crefname{fact}{Fact}{Facts}
\crefname{claim}{Claim}{Claims}
\newtheorem*{claim*}{Claim}
\crefname{claim}{Claim}{Claims}
\crefname{step}{Step}{Steps}
\newtheorem{case}{Case}
\crefname{case}{Case}{Cases}
\numberwithin{equation}{section}
\newcommand{\what}[1]{\widehat{#1}}
\newcommand{\ol}[1]{\overline{#1}}
\newcommand{\ratmap}{\dashrightarrow}
\newcommand{\longinjmap}{\lhook\joinrel\longrightarrow}
\newcommand{\bA}{\mathbf{A}}
\newcommand{\bC}{\mathbf{C}}
\newcommand{\bF}{\mathbf{F}}
\newcommand{\bH}{\mathbf{H}}
\newcommand{\bI}{\mathbf{I}}
\newcommand{\bM}{\mathbf{M}}
\newcommand{\bN}{\mathbf{N}}
\newcommand{\bQ}{\mathbf{Q}}
\newcommand{\bR}{\mathbf{R}}
\newcommand{\bT}{\mathbf{T}}
\newcommand{\bU}{\mathbf{U}}
\newcommand{\bZ}{\mathbf{Z}}
\newcommand{\bbi}{\mathbbm{i}}
\newcommand{\bbj}{\mathbbm{j}}
\newcommand{\bbk}{\mathbbm{k}}
\newcommand{\bk}{\mathbf{k}}
\newcommand{\bu}{\boldsymbol{u}}
\newcommand{\bv}{\boldsymbol{v}}
\newcommand{\sF}{\mathscr{F}}
\newcommand{\sH}{\mathscr{H}}
\newcommand{\sL}{\mathscr{L}}
\newcommand{\sM}{\mathscr{M}}
\newcommand{\sO}{\mathscr{O}}
\newcommand{\sT}{\mathsf{T}}
\newcommand{\reduced}{{\rm red}}
\newcommand{\alg}{\operatorname{alg}}
\newcommand{\alb}{\operatorname{alb}}
\newcommand{\Alb}{\operatorname{Alb}}
\newcommand{\Albert}{\mathsf{A}}
\newcommand{\ch}{\operatorname{char}}
\newcommand{\dR}{\operatorname{dR}}
\newcommand{\End}{\operatorname{End}}
\newcommand{\et}{{\textrm{\'et}}}
\newcommand{\Frob}{\mathsf{F}}
\newcommand{\Hom}{\operatorname{Hom}}
\newcommand{\isom}{\simeq}
\newcommand{\Mat}{\operatorname{M}}
\newcommand{\Nef}{\mathsf{Nef}}
\newcommand{\N}{\mathsf{N}}
\newcommand{\Nm}{\operatorname{N}}
\newcommand{\NS}{\mathsf{NS}}
\newcommand{\Pic}{\operatorname{Pic}}
\begin{document}

\title[Eigenvalues and dynamical degrees on abelian varieties]{Eigenvalues and dynamical degrees of self-maps on abelian varieties}

\author{Fei Hu}
\address{Pacific Institute for the Mathematical Sciences, Vancouver, BC V6T1Z4, Canada}
\curraddr{Department of Mathematics, University of Oslo, P.O. Box 1053, Blindern, 0316 Oslo, Norway}


\email{\href{mailto:hf@u.nus.edu}{\tt hf@u.nus.edu}}

\begin{abstract}
Let $X$ be a smooth projective variety over an algebraically closed field, and $f\colon X\to X$ a surjective self-morphism of $X$.
The $i$-th cohomological dynamical degree $\chi_i(f)$ is defined as the spectral radius of the pullback $f^{*}$ on the \'etale cohomology group $H^i_{\textrm{\'et}}(X, \mathbf{Q}_\ell)$ and the $k$-th numerical dynamical degree $\lambda_k(f)$ as the spectral radius of the pullback $f^{*}$ on the vector space $\mathsf{N}^k(X)_{\mathbf{R}}$ of real algebraic cycles of codimension $k$ on $X$ modulo numerical equivalence.
Truong conjectured that $\chi_{2k}(f) = \lambda_k(f)$ for all $0 \le k \le \dim X$ as a generalization of Weil's Riemann hypothesis.
We prove this conjecture in the case of abelian varieties.
In the course of the proof, we also obtain a new parity result on the eigenvalues of self-maps of abelian varieties in prime characteristic, which is of independent interest.
\end{abstract}

\subjclass[2020]{
14G17,	
14K05,	
16K20.	
}


\keywords{positive characteristic, abelian variety, endomorphism, dynamical degree, algebraic cycle, \'etale cohomology, parity, numerical and homological equivalence}

\thanks{The author was partially supported by a UBC-PIMS Postdoctoral Fellowship and Young Research Talents grant \#300814 from the Research Council of Norway.}


\maketitle



\section{Introduction}
\label{section:intro}


\subsection{Parity of eigenvalues}

Given a self-morphism $f \colon X \to X$ of a smooth complex projective variety $X$, the characteristic polynomial $P_i(f, t)$ of the pullback $f^*$ on $H^i(X, \bC)$ is a monic polynomial of degree $b_i(X)$ with integer coefficients, where $b_i(X)$ denotes the $i$-th Betti number of $X$.
As a direct consequence of the Hodge decomposition, for odd $i$, the $b_i(X)$ complex roots of $P_i(f, t)$ fall into $b_i(X)/2$ pairs with each one consisting of two conjugate complex numbers; in particular, all real roots of $P_i(f, t)$ are of even multiplicity.
Over a base field of positive characteristic, because of the absence of the Hodge decomposition,
it seems to be unknown whether an analog of the above parity with complex cohomology replaced by $\ell$-adic \'etale cohomology holds in general.

We show that it holds for abelian varieties in arbitrary characteristic.
Henceforth, we fix an algebraically closed field $\bk$ of arbitrary characteristic.

\begin{theorem}
\label{thm:B}
Let $\alpha \colon X \to X$ be an endomorphism of an abelian variety $X$ of dimension $g$ over $\bk$.
Let $P_\alpha(t)$ be the characteristic polynomial of the pullback $\alpha^*$ on the first \'etale cohomology group $H^1_{\emph\et}(X, \bQ_\ell)$ of $X$, which is a monic polynomial of degree $2g$ with integer coefficients.
Then there exists a monic polynomial $P_\alpha^{\Albert}(t) \in \bC[t]$ of degree $g$ such that $P_\alpha(t) = P_\alpha^{\Albert}(t)  \overline{P_\alpha^{\Albert}(t)}$.

In particular, the $2g$ complex roots of $P_\alpha(t)$ fall into $g$ pairs with each one consisting of two conjugate complex numbers; all real roots of $P_\alpha(t)$ are of even multiplicity.
\end{theorem}

Here, the characteristic polynomial $P_\alpha(t)$ of $\alpha$ is equivalently defined as the characteristic polynomial of the induced endomorphism $T_\ell(\alpha)$ on the Tate module $T_\ell(X)$ of $X$, independent of $\ell\neq \ch(\bk)$; see \S\ref{section:prelim} for details on abelian varieties.

\begin{remark}
\label{rmk:B1}
With notation as in \cref{thm:B}, we notice that the pullback $\alpha^*$ on the $i$-th \'etale cohomology group $H^i_{\et}(X, \bQ_\ell)$ is completely determined by its pullback $\alpha^*$ on $H^1_{\et}(X, \bQ_\ell)$ (see, e.g., \cref{thm:etale-coh}).
Hence it is straightforward to see that the same conclusion of \cref{thm:B} holds for all odd $i$.
\end{remark}

The remark below says that the above even multiplicity type phenomenon has occurred in positive characteristic, even though there is no Hodge decomposition/symmetry.

\begin{remark}
\label{rmk:Deligne}
Let $X_0$ be a smooth projective variety over a finite field $\bF_q$ of characteristic $p$,
and $X = X_0 \times_{\bF_q} \ol\bF_q$ the base change of $X_0$ to an algebraic closure $\ol\bF_q$ of $\bF_q$.
Let $\Frob$ denote the Frobenius endomorphism of $X$ relative to $\bF_q$.
Deligne proved that the characteristic polynomial $\Phi_i(t)$ of the pullback $\Frob^*$ on $H^i_{\et}(X, \bQ_\ell)$ has integer coefficients independent of $\ell$, and all of its roots are of modulus $q^{i/2}$ (see \cite[Th\'eor\`eme~1.6]{Deligne74}).
Later, building on his earlier work, he also proved the hard Lefschetz theorem for \'etale cohomology (see \cite[Th\'eor\`eme~4.1.1]{Deligne80}).
Then combining with Poincar\'e duality, there is a nondegenerate pairing
\[
H^i_{\et}(X, \bQ_\ell) \times H^i_{\et}(X, \bQ_\ell) \to \bQ_\ell(-i),
\]
which is compatible with the Frobenius action.
When $i$ is odd, the pairing is alternating and hence the $i$-th Betti number $b_i(X)$ is even (see \cite[Corollaire~4.1.5]{Deligne80}).
Putting all together, we see that the $b_i(X)$ complex roots of $\Phi_i(t)$ fall into $b_i(X)/2$ pairs with each one consisting of two conjugate complex numbers and all real roots of $\Phi_i(t)$ are of even multiplicity (see also \cite{Suh12,EJ15,SZ16}).
One may ask if an analog of this even multiplicity type result holds for arbitrary correspondences.
It turns out to follow from the standard conjectures, as kindly pointed out to the author by Deligne.
\end{remark}

For an arbitrary smooth projective variety $X$ over $\bk$, applying \cref{thm:B} to its Albanese variety $\Alb(X)$ which is an abelian variety of dimension equal to the geometric irregularity $q(X)\coloneqq b_1(X)/2$, we have the following direct corollary.

\begin{corollary}
\label{cor:A}
Let $f \colon X \to X$ be a self-morphism of a smooth projective variety $X$ of dimension $n$ over $\bk$.
Let $P_i(f,t)$ denote the characteristic polynomial of the pullback $f^*$ on $H^i_{\emph\et}(X, \bQ_\ell)$.
Then for $i=1$ and $2n-1$, the $b_i(X)$ complex roots of $P_i(f,t)$ fall into $b_i(X)/2$ pairs with each one consisting of two conjugate complex numbers and all real roots of $P_i(f,t)$ are of even multiplicity.
\end{corollary}

Since our proof of \cref{thm:B} relies on the classification of the endomorphism $\bQ$-algebras of simple abelian varieties by Albert in the 1930s (see \cite[\S 18.2]{Oort08} and references therein), we introduce the following notion.

\begin{definition}
\label{def:Albert}
Let $\alpha\colon X\to X$ be an endomorphism of an abelian variety $X$ of dimension $g$ over $\bk$.
Let $P_\alpha(t)$ denote the characteristic polynomial of the pullback $\alpha^*$ on $H^1_{\et}(X, \bQ_\ell)$.
A (monic) complex polynomial $P_\alpha^{\Albert}(t) \in \bC[t]$ is called an {\it Albert polynomial of $\alpha$}, if $P_\alpha(t) = P_\alpha^{\Albert}(t)  \overline{P_\alpha^{\Albert}(t)}$.
\end{definition}

We have seen in \cref{thm:B} that there always exists {\it an} Albert polynomial $P_\alpha^{\Albert}(t)$ of $\alpha$;
however, it may not be unique because we are free to switch those conjugate complex roots.

Note that when $X$ is a complex abelian variety, there is a canonical choice of an Albert polynomial of an endomorphism $\alpha$, namely, the characteristic polynomial of the pullback $\alpha^*$ on the Dolbeault cohomology group $H^{1,0}(X, \bC)$; this is also equal to the characteristic polynomial of the analytic representation $\rho_a(\alpha)$ of $\alpha$, where $\rho_a(\alpha)$ is the induced linear map of $\alpha$ on the universal cover $\bC^g$ of $X$.
So the notion of Albert polynomial could be regarded as a characteristic-free substitute of the characteristic polynomial of the analytic representation.

\begin{remark}
\label{rmk:overQ}
In the category of abelian varieties, it is inevitable to work on endomorphisms with rational coefficients, namely, the endomorphism $\bQ$-algebra $\End^0(X) \coloneqq \End(X) \otimes_\bZ \bQ$.
For any $\alpha \in \End^0(X)$ with $k\alpha \in \End(X)$ for some positive integer $k$, the {\it characteristic polynomial} $P_\alpha(t)$ of $\alpha$ is defined as $k^{-2g} P_{k\alpha}(kt) \in \bQ[t]$, which is still monic (see \cite[Proposition~12.4]{Milne86}).
Hence if $P_{k\alpha}^{\Albert}(t)$ is an Albert polynomial of $k\alpha \in \End(X)$, we call $k^{-g} P_{k\alpha}^{\Albert}(kt) \in \bC[t]$ {\it an Albert polynomial} $P_\alpha^{\Albert}(t)$ of $\alpha \in \End^0(X)$.
Clearly, this is independent of the choice of $k$.
\end{remark}

We henceforth fix a polarization $\phi = \phi_\sL \colon X \to \what X$, where $\sL \coloneqq \sO_X(H_X)$ is an ample line bundle on $X$ associated to a fixed ample divisor $H_X$ on $X$.
Let
\[
\alpha^\dagger \coloneqq \phi^{-1}\circ \what \alpha \circ \phi \in \End^0(X)
\]
be the {\it Rosati involution of $\alpha$} (see \cite[\S20 and \S21]{Mumford} and \cite[\S17]{Milne86} for more details).
It would follow from \cref{lemma:unique} that for a symmetric element $\alpha \in \End^0(X)$, i.e., $\alpha^\dagger = \alpha$, its Albert polynomial $P_\alpha^{\Albert}(t)$, in the sense of \cref{rmk:overQ}, is unique and lies in $\bR[t]$.
We now give a geometric characterization of the coefficients of {\it the} Albert polynomial of $\alpha^\dagger \circ \alpha$.

\begin{corollary}
\label{cor:B}
Let $\alpha \colon X \to X$ be an endomorphism of an abelian variety $X$ of dimension $g$ over $\bk$.
Then the Albert polynomial $P_{\alpha^\dagger \circ \alpha}^{\Albert}(t)$ of $\alpha^\dagger \circ \alpha$ is unique and has rational coefficients.
Moreover, if we write
\[
P_{\alpha^\dagger \circ \alpha}^{\Albert}(t) = \sum_{k=0}^g (-1)^k c_k \, t^{g-k},
\]
then for any $0\le k \le g$, one has
\[
c_k = \binom{g}{k} \frac{\, \alpha^* H_X^k \cdot H_X^{g-k}\, }{H_X^g} \in \bQ,
\]
where both $\alpha^* H_X^k \cdot H_X^{g-k}, \, H_X^g \in \bQ$ are intersection numbers.
\end{corollary}

The above result generalizes \cite[\S21, Theorem~1]{Mumford}, where $k = 1$, to more general intersection products.
One might be able to use the exterior product calculation, as in his proof, to deduce the formula.
However, our proof is geometric in nature and essentially relies on the positivity of the Rosati involution.

\subsection{An application to algebraic dynamics}
\label{subsec:app}

The second main result is an application of Albert polynomials to the study of certain dynamical problems on abelian varieties.
Indeed, our \cref{thm:A} confirms Truong's dynamical degree comparison \cref{conj:DDC} for abelian varieties, which we shall state in detail now.

Let $f\colon X \ratmap X$ be a dominant rational self-map of a smooth projective variety $X$ of dimension $n$ over $\bk$.
In order to measure the dynamical complexity of $f$ under iterations, one can associate to this $f$ two dynamical invariants as follows.
Let $\ell$ be a prime different from the characteristic of $\bk$.
Fix a field isomorphism $\iota\colon \ol\bQ_\ell \isom \bC$ so that we may speak of the complex absolute value of an element of $\ol\bQ_\ell$:
\[
|a|_{\iota} \coloneqq |\iota(a)|, \textrm{ for any } a \in \ol\bQ_\ell.
\]
We endow a norm $\norm{\cdot}_{\iota}$ on the finite-dimensional $\bQ_\ell$-vector space $H^{\bullet}_{\et}(X, \bQ_\ell)$.
For any $0\le i\le 2n$, the {\it $i$-{th} cohomological dynamical degree $\chi_i(f)_{\iota}$ of $f$} (with respect to $\iota$) is defined by
\[
\chi_i(f)_{\iota} \coloneqq \limsup_{m\to \infty} \big\|(f^m)^*|_{H^i_{\et}(X, \bQ_\ell)}\big\|^{1/m}_{\iota},
\]
where the pullback is always defined in the sense of correspondences (see, e.g., \cite[\S 1.3]{Kleiman68}).

One can define another dynamical degree of $f$ using algebraic cycles.
Indeed, let $\N^k(X)$ denote the group of algebraic cycles of codimension $k$ on $X$ modulo numerical equivalence.
Note that $\N^k(X)$ is a finitely generated free abelian group (see, e.g., \cite[Theorem~3.5]{Kleiman68}).
For any $0\le k\le n$, we define the {\it $k$-{th} numerical dynamical degree $\lambda_k(f)$ of $f$} as
\[
\lambda_k(f) \coloneqq \lim_{m\to \infty} \big\|(f^m)^*|_{\N^k(X)_\bR}\big\|^{1/m},
\]
where we fix an arbitrary norm $\norm{\cdot}$ on the $\bR$-vector space $\N^k(X)_\bR \coloneqq \N^k(X) \otimes_\bZ \bR$.
The above limit defining $\lambda_k(f)$ exists by \cite{Truong20,Dang20}.
Note that when $f$ is a self-morphism, the pullback action is compatible with the iteration, that is, $(f^m)^*=(f^*)^m$; it thus follows that $\chi_i(f)_{\iota} = \rho(f^*|_{H^i_{\et}(X, \bQ_\ell)})_{\iota}$ and $\lambda_k(f) = \rho(f^*|_{\N^k(X)_\bR})$, where $\rho(\varphi)$ denotes the spectral radius of certain linear operator $\varphi$.

Inspired by results in complex dynamics (see \cite{DS17} for a survey), in Esnault--Srinivas \cite{ES13}, and Weil's Riemann hypothesis (now Deligne's theorem \cite{Deligne74}), Truong proposed the following dynamical degree comparison conjecture (in fact, his version is formulated for more general dynamical correspondences which are natural generalizations of dominant rational maps).

\begin{conjecture}[{cf.~\cite[Question~2]{Truong}}]
\label{conj:DDC}
Let $f\colon X\ratmap X$ be a dominant rational self-map of a smooth projective variety $X$ of dimension $n$ over $\bk$.
Then for any $0\le k\le n$, we have $\chi_{2k}(f)_{\iota} = \lambda_k(f)$.
\end{conjecture}

As an application of Albert polynomials (see \cref{def:Albert,rmk:overQ,cor:B}), we give an affirmative answer to \cref{conj:DDC} for surjective self-morphisms of abelian varieties, extending the main result of \cite{Hu19} from the case $k=1$ to the general case.

\begin{theorem}
\label{thm:A}
Let $f$ be a surjective self-morphism of an abelian variety $X$ of dimension $g$ over $\bk$.
Then for any $0\le k\le g$, we have $\chi_{2k}(f)_{\iota} = \lambda_k(f)$, or equivalently,
\[
\rho(f^*|_{H^{2k}_{\emph\et}(X, \bQ_\ell)})_{\iota} = \rho(f^*|_{\N^k(X)_\bR}).
\]
\end{theorem}

We conclude this section with some remarks on \cref{conj:DDC,thm:A}.

\begin{remark}
\label{DDC}
We first discuss some related work and known results on \cref{conj:DDC}.
\begin{enumerate}[(1)]
\item When $\bk \subseteq \bC$, we may associate to $(X, f)$ a projective (and hence compact K\"ahler) manifold $X_\bC$ and a dominant meromorphic self-map $f_\bC$. Then by Artin's comparison theorem and Hodge theory, it is not hard to show that $\chi_{2k}(f)_{\iota} = \lambda_k(f)$ (see, e.g., \cref{lemma:dg-overC}); both of them agree with the usual dynamical degree defined using the Dolbeault cohomology group $H^{k,k}(X_\bC, \bC)$ in the context of complex dynamics (see \cite[\S4]{DS17}).

\item For an arbitrary algebraically closed field (in particular, of positive characteristic), Esnault and Srinivas \cite{ES13} proved that for an automorphism of a smooth projective surface, the second cohomological dynamical degree coincides with the first numerical dynamical degree.
However, as far as we know, \cref{conj:DDC} is still open even for polarized endomorphisms of $K3$ surfaces.
Recently, in joint work with Truong \cite{HT}, we verify this for polarized endomorphisms of Kummer surfaces using dynamical correspondences.

\item It turns out that \cref{conj:DDC} is related to Weil's Riemann hypothesis.
Precisely, as we have seen in \cref{rmk:Deligne}, if $X = X_0 \times_{\bF_q} \ol\bF_q$ is the base change of a smooth projective variety $X_0$ over a finite field $\bF_q$ to $\ol\bF_q$ and $\Frob$ is the Frobenius endomorphism of $X$ relative to $\bF_q$, then Deligne's celebrated theorem asserts that all eigenvalues of the pullback $\Frob^*$ on $H^i_{\et}(X, \bQ_\ell)$ are algebraic integers of modulus $q^{i/2}$.
In particular, we have that $\chi_i(\Frob)_{\iota} = q^{i/2}$ for all $i$ and $\iota$.
On the other hand, it is easy to see that the $k$-{th} numerical dynamical degree $\lambda_k(\Frob)$ of $\Frob$ is equal to $q^k$ (see \cref{thm:P} for a more general treatment of polarized endomorphisms).
It follows that $\chi_{2k}(\Frob)_{\iota} = \lambda_k(\Frob)$ for all $k$.

Conversely, using a standard product trick (i.e., consider the product morphism of the product variety), the K\"unneth formula, and Poincar\'e duality, one can show that \cref{conj:DDC} (for $X\times X$) implies Weil's Riemann hypothesis (for $X$; see \cite[Lemma~4.4]{HT}).
Recently, joint with Truong \cite{HT}, we systematically investigate the relations between various comparison conjectures on correspondences, Weil's Riemann hypothesis, and the standard conjectures.

\item When $f$ is a surjective self-morphism of $X$, Truong \cite[Theorem~1.1]{Truong} proved a weaker statement that
\[
h_{\et}(f) \coloneqq \max_{0\le i\le 2n} \log \chi_i(f)_{\iota} = \max_{0\le k\le n} \log \lambda_k(f) \eqqcolon h_{\alg}(f),
\]
which asserts that the (\'etale) entropy $h_{\et}(f)$ coincides with the algebraic entropy $h_{\alg}(f)$ in the sense of \cite[\S6.3]{ES13}.
As a consequence, the spectral radius of the pullback $f^*$ on the even-degree cohomology $H^{2\bullet}_{\et}(X, \bQ_\ell)$ coincides with the spectral radius of $f^*$ on the total cohomology $H^{\bullet}_{\et}(X, \bQ_\ell)$.
See also \cite{Shuddhodan19} for a different approach towards this equality using dynamical zeta functions.
In particular, when $\dim X = 2$, if one has $\lambda_1(f) \ge \lambda_2(f)$ (which holds for automorphisms of surfaces as considered in \cite{ES13}), then $\chi_2(f)_{\iota} \le \lambda_1(f)$ and hence $\chi_2(f)_{\iota} = \lambda_1(f)$; see \cite[Theorem~1.4]{Truong}.
Note that when $\bk \subseteq \bC$, by the fundamental work of Gromov \cite{Gromov03} and Yomdin \cite{Yomdin87}, the algebraic entropy also equals the topological entropy $h_{\textrm{top}}(f_\bC)$ of the holomorphic dynamical system $(X_\bC, f_\bC)$; see \cite[\S4]{DS17} for details.
\end{enumerate}
\end{remark}

\begin{remark}
\label{rmk:A1}
\begin{enumerate}[(1)]
\item Note that our \cref{thm:A} implies Weil's Riemann hypothesis for abelian varieties (see \cref{DDC}(3) and \cref{rmk:A2}).
Hence borrowing the ideas from dynamical systems, we extend a classical result of Weil \cite[\S21, Application~II]{Mumford}.
This also answers a \href{http://aimpl.org/cohomabelian/1/}{question} of Esnault raised in the AIM workshop \emph{\href{https://aimath.org/pastworkshops/cohomabelian.html}{Cohomological methods in abelian varieties}}, 2012.
For an automorphism $f$ of an abelian variety $X$, she actually asked whether the spectral radius of $f^*|_{H^{\bullet}_{\et}(X, \bQ_\ell)}$ is achieved on the subring in $H^{\bullet}_{\et}(X, \bQ_\ell)$ generated by all $(f^m)^*H_X$ with $m\in \bZ$.
Our \cref{thm:B} first asserts that $\rho(f^*|_{H^{\bullet}_{\et}(X, \bQ_\ell)})_{\iota} = \rho(f^*|_{H^{2k}_{\et}(X, \bQ_\ell)})_{\iota}$ for certain $0<k<g$ and then \cref{thm:A} applies.

\item As mentioned before, when $f$ is an automorphism of an abelian surface, \cref{thm:A} is already known by Esnault and Srinivas (see \cite[\S4]{ES13}).
Even in this two-dimensional case, their proof is quite involved and different from ours (actually, after the standard spreading out and specialization argument, they invoke a celebrated theorem of Tate \cite{Tate66}).
Previously, the author \cite{Hu19} has dealt with the case of abelian varieties but only when $k = 1$.
In that case, we can naturally embed the N\'eron--Severi group $\NS(X)$ of an abelian variety $X$ into the semisimple $\bQ$-algebra $\End^0(X)$ of endomorphisms (with $\bQ$-coefficients) and extend the pullback $f^*$ on $\NS(X)$ to $\End^0(X)$.
Thanks to the structure theorem of $\End^0(X)$, the later action $f^*$ on $\End^0(X)$ can be reinterpreted in terms of matrices related to the matrix representing the pullback $f^*$ on $H^1_{\et}(X, \bQ_\ell)$.
However, since algebraic cycles of higher codimensions are much more subtle \cite{OSullivan11}, we are not aware of any embedding of $\N^k(X)_\bR$ into certain nice spaces for $k\ge 2$, on which the pullback $f^*$ can be extended and easily understood.
The proof for general $k$ presented in this paper is different from \cite{Hu19}.

\item If an endomorphism $\alpha\colon X\to X$ is not surjective, one can also proceed by replacing $X$ (possibly several times) by the image $Y\coloneqq \alpha(X)$, which is then a lower-dimensional abelian subvariety of $X$ such that $\alpha|_Y\colon Y\to Y$ is surjective and $X$ is isogenous to $Y\times Z$ for some abelian subvariety $Z$ of $X$.
\end{enumerate}
\end{remark}


\section{Preliminaries on abelian varieties}
\label{section:prelim}


We refer to \cite{Mumford} and \cite{Milne86} for standard notations and terminologies on abelian varieties.

\begin{notation*}
The following notations remain in force throughout the rest of this paper unless otherwise stated.
\renewcommand*{\arraystretch}{1.1}
\begin{longtable*}{p{2cm} p{12cm}}
$\bk$ & an algebraically closed field of arbitrary characteristic \\
$\ell$ & a prime different from the characteristic of $\bk$ \\
$X$ & an abelian variety of dimension $g$ over $\bk$ \\
$\what X$ & the dual abelian variety $\Pic^0(X)$ of $X$ \\
$\End^0(X)$ & $\End(X) \otimes_\bZ \bQ$, the endomorphism $\bQ$-algebra of $X$ \\
$\alpha$ & an endomorphism of $X$, or an element of $\End^0(X)$ \\
$\N^k(X)_\bR$ & $\N^k(X) \otimes_\bZ \bR$, the $\bR$-vector space of numerical equivalence classes of codimension-$k$ real algebraic cycles on $X$ \\
$H^i_{\et}(X, \bQ_\ell)$ & $H^i_{\et}(X, \bZ_\ell) \otimes_{\bZ_\ell} \bQ_\ell$, the $\ell$-adic \'etale cohomology group of degree $i$ \\
$\lambda_k(\alpha)$ & the $k$-th numerical dynamical degree of $\alpha$ \\
$\chi_i(\alpha)_{\iota}$ & the $i$-th cohomological dynamical degree of $\alpha$ with respect to $\iota\colon \ol\bQ_\ell \isom \bC$ \\
$P_\alpha(t)$ & the characteristic polynomial of $\alpha$, which has degree $2g$ and integer (resp. rational) coefficients if $\alpha \in \End(X)$ (resp. if $\alpha \in \End^0(X)$) \\
$\chi_\alpha^{\reduced}(t)$ & the reduced characteristic polynomial of $\alpha$ \\
$P_\alpha^{\Albert}(t)$ & an Albert polynomial of $\alpha$, which has degree $g$ and complex coefficients \\
$\phi_\sM$ & the induced homomorphism of a line bundle $\sM$ on $X$: \\
 & \quad \quad $\phi_\sM \colon X \to \what X, \ \ x \mapsto t_x^*\sM \otimes \sM^{-1}$ \\
$\phi = \phi_{\sL}$ & the fixed polarization of $X$ induced from some fixed ample line bundle $\sL = \sO_X(H_X)$ \\
$^\dagger$ & the Rosati involution on $\End^0(X)$ defined in the following way: \\
 & \quad \quad $\alpha \mapsto \alpha^\dagger \coloneqq \phi^{-1}\circ \what \alpha \circ \phi$, for any $\alpha \in \End^0(X)$ \\
$\bH$ & the standard quaternion algebra over $\bR$ whose basis is $\{ 1, \bbi, \bbj, \bbk\}$
\end{longtable*}
\end{notation*}

For the convenience of the reader, we include several important structure theorems on the \'etale cohomology groups, the endomorphism algebras, and the N\'eron--Severi groups of abelian varieties.
We refer to \cite[\S19-21]{Mumford} and \cite{Milne86} for details and to \cite{Tate66,Oort08} for more on abelian varieties over finite fields.

First, the \'etale cohomology groups of abelian varieties are simple to describe.

\begin{theorem}[{cf.~\cite[Theorem~15.1]{Milne86}}]
\label{thm:etale-coh}
Let $X$ be an abelian variety of dimension $g$ over $\bk$, and let $\ell$ be a prime different from the characteristic of $\bk$. Let $T_\ell(X) \coloneqq \varprojlim_{n} X(\bk)[\ell^n]$ be the Tate module of $X$, which is a free $\bZ_\ell$-module of rank $2g$.
\begin{itemize}
\item[(a)] There is a canonical isomorphism
\[
H^1_{\emph\et}(X, \bZ_\ell) \isom T_\ell(X)^\vee \coloneqq \Hom_{\bZ_\ell}(T_\ell(X), \bZ_\ell).
\]
\item[(b)] The cup-product pairing induces isomorphisms
\[
H^i_{\emph\et}(X, \bZ_\ell) \isom \bigwedge\nolimits^{\! i} H^1_{\emph\et}(X, \bZ_\ell),
\]
for all $i$.
In particular, $H^i_{\emph\et}(X, \bZ_\ell)$ is a free $\bZ_\ell$-module of rank $\binom{2g}{i}$.
\end{itemize}
\end{theorem}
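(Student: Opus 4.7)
The plan is to establish (a) from the classical description of the étale fundamental group of an abelian variety, and then to upgrade this to (b) by exploiting the Hopf algebra structure on $H^*_{\et}(X,\bZ_\ell)$ induced by the group law $m\colon X\times X\to X$.

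For part (a), I would use that, since $\ell$ is coprime to $\ch\bk$, the isogeny $[\ell^n]\colon X\to X$ is a finite étale Galois cover with Galois group $X_{\ell^n}(\bk)$. The standard identification of $H^1_{\et}$ of a connected pointed variety with continuous characters of its étale fundamental group then gives
\[
H^1_{\et}(X,\bZ/\ell^n\bZ)\cong\Hom\bigl(X_{\ell^n}(\bk),\bZ/\ell^n\bZ\bigr),
\]
and passing to the inverse limit over $n$ yields the canonical isomorphism $H^1_{\et}(X,\bZ_\ell)\isom (T_\ell X)^\vee$.

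For part (b), I would first invoke the Künneth formula (together with torsion-freeness of the relevant cohomology groups, which will be verified along the way) to obtain $H^*_{\et}(X\times X,\bZ_\ell)\cong H^*_{\et}(X,\bZ_\ell)\otimes_{\bZ_\ell} H^*_{\et}(X,\bZ_\ell)$; pullback along $m$, inversion, and the unit section then promotes $H^*_{\et}(X,\bZ_\ell)$ to a graded-commutative, graded-cocommutative Hopf algebra. The key observation is that every class $a\in H^1_{\et}(X,\bZ_\ell)$ is primitive, $m^*(a)=a\otimes 1+1\otimes a$: via the identification in (a), $m^*$ on $H^1$ corresponds on Tate modules to the sum of the two projections $T_\ell X\oplus T_\ell X\twoheadrightarrow T_\ell X$, and dualizing produces exactly the primitive coproduct. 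This yields a graded ring homomorphism $\bigwedge^{\!*} H^1_{\et}(X,\bZ_\ell)\to H^*_{\et}(X,\bZ_\ell)$.

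To show that this map is an isomorphism, I would combine a rank comparison with an injectivity argument. Working first over $\bQ_\ell$, the Milnor--Moore structure theorem for graded-cocommutative Hopf algebras with primitives in odd degree (or, equivalently, the eigenvalue argument that $[n]^*$ acts as $n$ on $H^1$ and hence as $n^i$ on the image of $\bigwedge^{\!i} H^1$ in $H^i$, pinning down the relevant eigenspaces) forces injectivity; surjectivity and the rank $\binom{2g}{i}$ can then be checked by reducing via Poincaré's complete reducibility theorem and isogeny invariance of rational cohomology to the case $X$ is isogenous to $E^g$ for an elliptic curve $E$, where iterated Künneth gives the statement directly. The main obstacle is upgrading this to the integral claim that $H^i_{\et}(X,\bZ_\ell)$ is $\bZ_\ell$-free of rank $\binom{2g}{i}$. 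I would handle this by rerunning the Hopf algebra and Künneth dimension-count arguments mod $\ell$ (the primitivity argument works verbatim over $\bF_\ell$), which forces $\dim_{\bF_\ell}H^i_{\et}(X,\bF_\ell)=\binom{2g}{i}$; combined with the rational statement and the universal coefficient sequence, this pins down $H^i_{\et}(X,\bZ_\ell)$ as $\bZ_\ell$-free of rank $\binom{2g}{i}$ and forces the cup-product map to be an isomorphism integrally, completing the exterior algebra identification in (b).
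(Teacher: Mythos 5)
Part (a) via Kummer theory and the \'etale fundamental group is correct, and the Hopf-algebra framing of (b) --- K\"unneth, the coproduct induced by $m^*$, and primitivity of $H^1$ obtained by dualizing $T_\ell m$ --- is the right approach. But the step that $\bigwedge^* H^1 \to H^*$ is an isomorphism does not go through as written. The eigenvalue remark that $[n]^*$ acts by $n^i$ on the image of $\bigwedge^i H^1$ constrains the image, not the kernel, so it does not give injectivity by itself; injectivity genuinely needs an additional input such as Poincar\'e duality (the pairing $\bigwedge^i H^1 \otimes \bigwedge^{2g-i}H^1 \to \bigwedge^{2g}H^1$ is perfect, and one has to check that $\bigwedge^{2g}H^1 \to H^{2g}$ is nonzero). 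More seriously, the surjectivity and rank step ``reduce to $X$ isogenous to $E^g$'' is not available: Poincar\'e's complete reducibility theorem gives $X \sim \prod_j A_j^{n_j}$ with $A_j$ \emph{simple}, and a simple abelian variety of dimension $\ge 2$ is not isogenous to a power of an elliptic curve, so for such a factor your K\"unneth bootstrap produces no rank bound at all.

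The repair, staying inside the Hopf-algebra framework you already set up, is to use the structure theorem in full rather than only invoking it for injectivity. Over $\bQ_\ell$, Milnor--Moore identifies $H^*(X,\bQ_\ell)$ with the free graded-commutative algebra on the graded space $P$ of primitives. Finite-dimensionality of the total cohomology kills even-degree primitives; $P^1 = H^1$ has dimension $2g$ by part (a); and since $H^{2g}\neq 0$ (Poincar\'e duality) while $H^i = 0$ for $i > 2g$ (cohomological dimension), a top-degree count forces $P = P^1$. That gives injectivity and surjectivity at once, with the ranks $\binom{2g}{i}$. Your subsequent mod-$\ell$ and universal-coefficient descent to $\bZ_\ell$ is fine in outline, but over $\bF_\ell$ the relevant structure theorem is Borel's (which allows truncated polynomial generators in even degree), and the same top-degree argument is what excludes those generators there.
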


Furthermore, the functor $T_\ell$ induces an $\ell$-adic representation of the endomorphism algebra. In general, we have:

\begin{theorem}[{cf.~\cite[\S19, Theorem~3]{Mumford}}]
\label{thm:l-adic-rep}
For any two abelian varieties $X$ and $Y$, the group $\Hom(X,Y)$ of homomorphisms of $X$ into $Y$ is a finitely generated free abelian group, and the natural homomorphism of $\bZ_\ell$-modules
\[
\Hom(X, Y) \otimes_\bZ \bZ_\ell \to \Hom_{\bZ_\ell}(T_\ell(X), T_\ell(Y))
\]
induced by $T_\ell \colon \Hom(X, Y) \to \Hom_{\bZ_\ell}(T_\ell(X), T_\ell(Y))$ is injective.
\end{theorem}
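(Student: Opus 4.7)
The plan is to split the theorem into three steps: (a) $\Hom(X, Y)$ is torsion-free; (b) the underlying $\bZ$-linear map $T_\ell \colon \Hom(X, Y) \to \Hom_{\bZ_\ell}(T_\ell X, T_\ell Y)$ is injective; and (c) $\Hom(X, Y)$ is finitely generated. Granted (a) and (c), $\Hom(X, Y)$ is a free $\bZ$-module of finite rank, and then flatness of $\bZ_\ell$ over $\bZ$ promotes the injectivity in (b) to the injectivity of $\Hom(X,Y)\otimes_{\bZ} \bZ_\ell \to \Hom_{\bZ_\ell}(T_\ell X, T_\ell Y)$ asserted in the theorem.

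Step (a) is immediate: if $nf = 0$ for some integer $n \geq 1$, then the image $f(X)$ is contained in the finite closed subgroup scheme $Y[n] \subset Y$, and connectedness of $X$ together with $f(0) = 0$ forces $f = 0$. For step (b), if $T_\ell f = 0$, then $f$ vanishes on $X[\ell^m]$ for every $m$, so by the universal property of the quotient $X/X[\ell^m] \simeq X$ (where the isomorphism is induced by $[\ell^m]$) one can factor $f = \ell^m g_m$ for some $g_m \in \Hom(X, Y)$; thus $f \in \bigcap_m \ell^m \Hom(X, Y)$, an intersection that vanishes as soon as (c) is established (every finitely generated torsion-free abelian group is $\ell$-adically separated).

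The principal obstacle is (c), which I propose to treat via the positivity of the Rosati involution. The embedding $\Hom(X, Y) \injmap \End(X \times Y)$ sending $f \mapsto \bigl(\begin{smallmatrix} 0 & 0 \\ f & 0 \end{smallmatrix}\bigr)$ reduces the problem to showing $\End(Z)$ is finitely generated for an arbitrary abelian variety $Z$ (take $Z = X \times Y$). Fix a polarization $\phi$ on $Z$ with corresponding Rosati involution $^\dagger$ on $\End^0(Z)$, and consider the symmetric bilinear form
\[
B(\alpha, \beta) \coloneqq \Tr\bigl(V_\ell(\alpha^\dagger \beta)\bigr).
\]
This form is integer-valued on $\End(Z)$, because the characteristic polynomial of $V_\ell \gamma$ has integer coefficients for every $\gamma \in \End(Z)$, and positive definite on $\End^0(Z) \otimes_\bQ \bR$ by the classical positivity theorem for the Rosati involution. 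Hence $\End(Z)$ sits as a discrete subgroup of an inner-product space. To conclude finite generation we still need finite-dimensionality of $\End^0(Z) \otimes_\bQ \bR$; this I would obtain via Poincar\'e complete reducibility $Z \sim \prod_i A_i^{n_i}$ into simple isogeny factors, combined with Albert's classification of division $\bQ$-algebras admitting a positive involution, which bounds each $\dim_\bQ \End^0(A_i)$. Coupled with discreteness, this yields finite generation and closes the argument; the delicate point throughout is to arrange (b) and (c) so that the divisibility argument in (b) is not used circularly to establish the finite-dimensionality needed in (c).
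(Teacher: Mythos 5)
The flatness step is incorrect, and this is not a cosmetic slip. Flatness of $\bZ_\ell$ over $\bZ$ only gives injectivity of $\Hom(X,Y)\otimes_\bZ\bZ_\ell \to \Hom_{\bZ_\ell}(T_\ell X,T_\ell Y)\otimes_\bZ\bZ_\ell$; the theorem's target is $\Hom_{\bZ_\ell}(T_\ell X,T_\ell Y)$ itself, which is a proper quotient of the latter, not equal to it. Injectivity of $T_\ell$ on $\Hom(X,Y)$ says only that the images $T_\ell(f_1),\dots,T_\ell(f_r)$ of a $\bZ$-basis are $\bZ$-linearly independent; the theorem asserts that they are $\bZ_\ell$-linearly independent, which is strictly stronger (for instance $1$ and any $a\in\bZ_\ell\setminus\bQ$ are $\bZ$-independent but $\bZ_\ell$-dependent inside $\bZ_\ell$). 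So even granting (a), (b) and (c), you have not proved the theorem; the $\bZ_\ell$-independence must be established directly.

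The proposed route to (c) is also circular and incomplete. Albert's classification is a statement about which \emph{finite-dimensional} division $\bQ$-algebras with positive involution can occur as $\End^0(A)$; invoking it to bound $\dim_\bQ\End^0(A)$ presupposes exactly the finiteness you need. And even with positivity of the Rosati trace form, discreteness of $\End(Z)$ in $\End^0(Z)\otimes_\bQ\bR$ gives nothing when the ambient space could be infinite-dimensional (e.g.\ $\bigoplus_{\bN}\bZ\subset\bigoplus_{\bN}\bR$ is discrete but not finitely generated). The finite-dimensionality of $\End^0$ has to come from the Tate module, and the standard proof (Mumford, \S19, Theorem~3) in fact handles both of your gaps with one sharper lemma: for $M\subset\Hom(X,Y)$ finitely generated and \emph{saturated} (i.e.\ $\bQ M\cap\Hom(X,Y)=M$) with $\bZ$-basis $f_1,\dots,f_r$, the classes $T_\ell(f_1),\dots,T_\ell(f_r)$ are $\bZ_\ell$-linearly independent. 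Given a relation $\sum a_iT_\ell(f_i)=0$ with some $a_i\in\bZ_\ell^\times$, one approximates the $a_i$ mod $\ell^N$ by integers $n_i$; then $g=\sum n_if_i$ kills $X[\ell^N]$, so $g=\ell^N h$ for some $h\in\Hom(X,Y)$, and saturation forces $h\in M$, contradicting $\ell\nmid n_i$ for some $i$. This simultaneously bounds the rank of every saturated finitely generated subgroup by $4\dim X\cdot\dim Y$, from which finite generation of $\Hom(X,Y)$ follows, and then taking $M=\Hom(X,Y)$ gives the theorem in full. Your Rosati trace form is a legitimate complementary tool, but it cannot substitute for this $\ell$-adic rank bound.
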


For a homomorphism $f \colon X \to Y$ of abelian varieties, its {\it degree} $\deg f$ is defined to be the order of the kernel $\ker f$, if it is finite, and $0$ otherwise.
In particular, the degree of an isogeny is always a positive integer.
We can extend this notion to any $\alpha \in \End^0(X)$ by setting $\deg \alpha = n^{-2g} \deg(n\alpha)$ if $n\alpha \in \End(X)$.

\begin{theorem}[{cf.~\cite[\S19, Theorem~4]{Mumford} and \cite[\S12, Proposition~12.4]{Milne86}}]
\label{thm:char-poly}
For any $\alpha \in \End^0(X)$, there exists a monic polynomial $P_\alpha(t) \in \bQ[t]$ of degree $2g$ such that $P_\alpha(r) = \deg(r_X - \alpha)$ for all rational numbers $r$.
Moreover, if $\alpha\in \End(X)$, then $P_\alpha(t)$ has only integer coefficients; it is also equal to the characteristic polynomial $\det(t \, \bI_{2g} - V_\ell(\alpha))$ of the induced linear map $V_\ell(\alpha)$ of $\alpha$ on the Tate space $V_\ell(X) \coloneqq T_\ell(X) \otimes_{\bZ_\ell} \bQ_\ell$.
\end{theorem}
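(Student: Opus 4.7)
The plan is to define $P_\alpha(t)$ via the $\ell$-adic Tate representation and then verify the degree formula. For $\alpha \in \End(X)$, I would take as the candidate
\[
P_\alpha(t) \coloneqq \det(t \cdot \bI_{2g} - V_\ell \alpha) \in \bQ_\ell[t],
\]
which is manifestly monic of degree $2g$. The crux of the theorem reduces to establishing, for every $\beta \in \End(X)$, the numerical identity
\[
\deg \beta = \det(V_\ell \beta).
\]
Granting this and applying it to $\beta = r_X - \alpha$ gives $P_\alpha(r) = \det(r \cdot \bI_{2g} - V_\ell \alpha) = \deg(r_X - \alpha)$ for $r \in \bZ$; the statement for rational $r = m/n$ then follows by comparing both sides to $n^{-2g} P_{n\alpha}(m)$ using the standard homogeneity $\deg(n\gamma) = n^{2g} \deg \gamma$ and the scaling of the determinant under $V_\ell(n\alpha) = n \cdot V_\ell \alpha$.

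To prove the key identity, I would exploit the cohomological structure supplied by \cref{thm:etale-coh}. By part (b) of that theorem, the top-degree étale cohomology $H^{2g}_\et(X, \bQ_\ell) \cong \bigwedge^{2g} H^1_\et(X, \bQ_\ell)$ is one-dimensional over $\bQ_\ell$, so the action of $\beta^*$ on it is scalar multiplication by $\det(\beta^*|_{H^1_\et(X, \bQ_\ell)})$. On the other hand, $\beta^*$ on top cohomology acts as multiplication by $\deg \beta$: when $\beta$ is an isogeny, this is the classical pullback-degree relation for finite surjective maps of smooth projective varieties; when $\beta$ fails to be an isogeny, its image $\beta(X)$ is a proper abelian subvariety of $X$, so $\beta^*$ factors through $H^{2g}_\et(\beta(X), \bQ_\ell) = 0$, while $V_\ell \beta(V_\ell X) \subseteq V_\ell(\beta(X))$ sits in a proper $\bQ_\ell$-subspace of $V_\ell X$, forcing $\det V_\ell \beta = 0$ as well. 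Combining with the canonical identification $H^1_\et(X, \bQ_\ell) \cong (V_\ell X)^\vee$ from part (a), the action of $\beta^*$ on $H^1$ is the transpose of $V_\ell \beta$, so $\det(\beta^*|_{H^1}) = \det V_\ell \beta$ and the identity follows.

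Once the key identity is in hand, I would show that $P_\alpha(t)$ is independent of $\ell$ and has integer coefficients: the values $P_\alpha(n) = \deg(n_X - \alpha) \in \bZ$ are manifestly independent of $\ell$, and since $P_\alpha(t)$ is a monic polynomial of degree $2g$ determined by $2g+1$ integer values via Lagrange interpolation, the polynomial itself is independent of $\ell$ and lies in $\bQ[t]$. Combined with the containment $P_\alpha(t) \in \bZ_\ell[t]$ coming from the $\bZ_\ell$-lattice structure of $T_\ell X$, varying $\ell$ gives integer coefficients in characteristic zero directly; in positive characteristic, the argument must be supplemented with the polynomial-function nature of $\deg$ on $\End^0(X)$ to handle the excluded prime. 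Finally, to extend to arbitrary $\alpha \in \End^0(X)$, pick $n \in \bZ_{>0}$ with $n\alpha \in \End(X)$ and set $P_\alpha(t) \coloneqq n^{-2g} P_{n\alpha}(nt)$; independence of $n$ and the extended formula both follow from the homogeneity of $\deg$.

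The main obstacle I anticipate is the non-isogeny case of the key identity, where one must carefully align the vanishing of $\deg \beta$ with the vanishing of $\det V_\ell \beta$ via the rank–dimension comparison between $\beta(X)$ and $V_\ell \beta(V_\ell X)$. A secondary subtlety is the promotion of $\bZ_\ell$-integrality to genuine $\bZ$-integrality in positive characteristic $p = \ch\bk$: the purely $\ell$-adic route excludes $\ell = p$ and a naïve intersection $\bigcap_{\ell \ne p} \bZ_{(\ell)}$ only yields $\bZ[1/p]$, so one must invoke the polynomial-function extension of $\deg$ on $\End^0(X)$ (or, alternatively, crystalline/Dieudonné information at $p$) to close the gap.
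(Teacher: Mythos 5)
The paper cites this result from Mumford and Milne without reproducing a proof, so there is no in-text argument to compare against; what you are reconstructing is the classical proof. Your route through top cohomology — identifying $\beta^*$ on $H^{2g}_{\et}(X,\bQ_\ell) \cong \bigwedge^{2g} H^1_{\et}(X,\bQ_\ell)$ both with multiplication by $\deg\beta$ and with $\det(\beta^*|_{H^1})$, then transposing via $H^1_{\et}(X,\bQ_\ell) \cong (V_\ell X)^\vee$ — is a correct and clean way to get $\deg\beta = \det V_\ell\beta$, and your treatment of the non-isogeny case (both sides vanish because $\beta$ factors through a lower-dimensional abelian subvariety) is sound. The Lagrange-interpolation step establishing $P_\alpha \in \bQ[t]$ is also fine. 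Where Mumford's own argument differs is that he works from the other end: he first shows $\deg$ is a homogeneous polynomial function on $\End^0(X)$ via norm forms, obtains $P_\alpha \in \bZ[t]$ directly from that, and only then compares with the characteristic polynomial of $T_\ell\alpha$; your approach starts from the Tate module and recovers the degree polynomial.

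The gap you flag — upgrading $\bZ[1/p]$-coefficients to $\bZ$-coefficients in characteristic $p$ — is genuine, and the two repairs you suggest (Mumford's norm-form machinery, or crystalline/Dieudonné theory at $p$) are both heavier than necessary. The clean fix is purely algebraic and uses only that $\End(X)$ is a finitely generated free $\bZ$-module (the paper's \cref{thm:l-adic-rep}): left multiplication by $\alpha$ on the lattice $\End(X)$ has a monic characteristic polynomial $\chi(t)\in\bZ[t]$, Cayley--Hamilton gives $\chi(\alpha)=0$ in $\End(X)$, and applying the ring embedding $\End(X)\hookrightarrow \End_{\bZ_\ell}(T_\ell X)$ gives $\chi(V_\ell\alpha)=0$. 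Thus every eigenvalue of $V_\ell\alpha$ is a root of a monic integer polynomial, i.e., an algebraic integer; the coefficients of $P_\alpha(t)=\det(t\,\bI_{2g}-V_\ell\alpha)$ are therefore algebraic integers, and since you have already shown $P_\alpha\in\bQ[t]$, they lie in $\bZ$. This closes the gap uniformly in all characteristics without any $p$-adic input. One more small caution: as stated, your interpolation step only shows $P_\alpha$ is integer-valued, which for a monic rational polynomial does not by itself imply integer coefficients (e.g.\ $t^3+\tbinom{t}{2}$), so the $\ell$-adic lattice integrality you invoke alongside it is really doing essential work and should not be presented as a mere corroboration.
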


We call $P_\alpha(t)$ as in \cref{thm:char-poly} the {\it characteristic polynomial of $\alpha$}.
On the other hand, we can assign to each $\alpha$ the characteristic polynomial $\chi_\alpha(t)$ of $\alpha$ as an element of the semisimple $\bQ$-algebra $\End^0(X)$.
Namely, we define $\chi_\alpha(t)$ to be the characteristic polynomial of the left multiplication $\alpha_L \colon \beta \mapsto \alpha \circ \beta$ for $\beta \in \End^0(X)$ which is a $\bQ$-linear transformation on $\End^0(X)$.
Note that the above definition of $\chi_\alpha(t)$ makes no use of the fact that $\End^0(X)$ is semisimple.
Actually, for semisimple $\bQ$-algebras, it is much more useful to consider the so-called reduced characteristic polynomials.

We recall some basic definitions on semisimple algebras (see \cite[\S9]{Reiner03} for details).

\begin{definition}
\label{def:red-char}
Let $R$ be a finite-dimensional semisimple algebra over a field $F$ of characteristic zero, and write
\[
R = \bigoplus_{i=1}^{k} R_i,
\]
where each $R_i$ is a simple $F$-algebra.
For any element $r \in R$, as above, we denote by $\chi_r(t)$ the {\it characteristic polynomial of $r$}.
Namely, $\chi_r(t)$ is the characteristic polynomial of the left multiplication $r_L \colon s \mapsto rs$ for $s\in R$.
Let $K_i$ be the center of $R_i$.
Then there exists a finite field extension $E_i/K_i$ splitting $R_i$ (see \cite[\S7b]{Reiner03}), i.e., we have
\[
h_i \colon R_i \otimes_{K_i} E_i \xrightarrow{~\sim~} \Mat_{d_i}(E_i), \quad \text{where } [R_i:K_i] = d_i^2.
\]
Write $r = r_1 + \cdots + r_k$ with each $r_i \in R_i$.
We first define the {\it reduced characteristic polynomial $\chi_{r_i}^{\reduced}(t)$ of each $r_i$} as follows (see \cite[Definition~9.13]{Reiner03}):
\[
\chi_{r_i}^{\reduced}(t) \coloneqq \Nm_{K_i/F} (\det(t \, \bI_{d_i} - h_i(r_i \otimes_{K_i} \! 1_{E_i})) ) \in F[t].
\]
Note that $\det(t \, \bI_{d_i} - h_i(r_i \otimes_{K_i} \! 1_{E_i}))$ lies in $K_i[t]$, and is independent of the choice of the splitting field $E_i$ of $R_i$ (see, e.g., \cite[Theorem~9.3]{Reiner03}).
The {\it reduced norm of $r_i$} is defined by
\[
\Nm_{R_i/F}^{\reduced}(r_i) \coloneqq \Nm_{K_i/F} (\det (h_i(r_i \otimes_{K_i} \! 1_{E_i})) ) \in F.
\]
Finally, as one expects, the {\it reduced characteristic polynomial $\chi_{r}^{\reduced}(t)$} and the {\it reduced norm $\Nm_{R/F}^{\reduced}(r)$ of $r=\sum_i r_i$} are defined by the products:
\begin{equation*}
\label{eq:def-red-char}
\chi_{r}^{\reduced}(t) \coloneqq \prod_{i=1}^{k} \chi_{r_i}^{\reduced}(t) \quad \text{and} \quad 
\Nm_{R/F}^{\reduced}(r) \coloneqq \prod_{i=1}^{k} \Nm_{R_i/F}^{\reduced}(r_i).
\end{equation*}
\end{definition}

\begin{remark}
\label{rmk:red-char}
\begin{enumerate}[(1)]
\item It follows from \cite[Theorem~9.14]{Reiner03} that
\begin{equation}
\label{eq:Rei03-Thm9.14}
\chi_r(t) = \prod_{i=1}^{k} \chi_{r_i}(t) = \prod_{i=1}^{k} \chi_{r_i}^{\reduced}(t)^{d_i}.
\end{equation}

\item Note that reduced characteristic polynomials and norms are not affected by change of ground field (see, e.g., \cite[Theorem~9.27]{Reiner03}).
\end{enumerate}
\end{remark}

We now apply the above algebraic set-up to $R = \End^0(X)$, the semisimple $\bQ$-algebra of endomorphisms (with $\bQ$-coefficients) of an abelian variety $X$ over $\bk$.
For any $\alpha \in \End^0(X)$, let $\chi_{\alpha}^{\reduced}(t) \in \bQ[t]$ denote its {\it reduced characteristic polynomial}.
For simplicity, let us first consider the case when
\[
X = A^n
\]
is a power of a simple abelian variety $A$ over $\bk$.
Hence $\End^0(X) \isom \Mat_n(D)$, where $D\coloneqq \End^0(A)$ is a division ring.
Let $K$ denote the center of $D$ which is a field, and $K_0$ the maximal totally real subfield of $K$. Set
\begin{equation}
\label{eq:ed}
d^2 = [D:K], \ e = [K:\bQ], \ \text{ and } \ e_0 = [K_0:\bQ].
\end{equation}
Then the equality \eqref{eq:Rei03-Thm9.14} reads as
\[
\chi_\alpha(t) = \chi_{\alpha}^{\reduced}(t)^{dn}.
\]

Let $V_1, \ldots, V_e$ denote the $e$ nonisomorphic irreducible representations of $\End^0(X)$ over $\ol{\bQ}$, where each one has degree $dn$.
Note that for any $\alpha\in \End^0(X)$, the reduced characteristic polynomial $\chi_\alpha^{\reduced}(t)$ defined above is exactly the same as the characteristic polynomial of $\alpha$ acting on $\bigoplus_i V_i$.
We thus call $V^{\reduced} \coloneqq \bigoplus_i V_i$ the reduced representation of $\End^0(X)$.
The proposition below shows that the two characteristic polynomials $P_\alpha(t)$ and $\chi_{\alpha}^{\reduced}(t)$ are closely related.

\begin{proposition}[{cf.~\cite[Proposition~12.12]{Milne86}}]
\label{prop:Milne}
With notation as above, the representation $T_\ell(X) \otimes_{\bZ_\ell} \ol{\bQ}_\ell = V_\ell(X) \otimes_{\bQ_\ell} \ol{\bQ}_\ell$ of $\End^0(X)$ induced from \cref{thm:l-adic-rep} is isomorphic to a direct sum of $m \coloneqq 2g/(edn)$ copies of $V^{\reduced} \otimes_{\ol{\bQ}} \ol{\bQ}_\ell$.
In particular, for any $\alpha\in \End^0(X)$, we have
\[
\bQ[t] \ni P_\alpha(t) = \chi_{\alpha}^{\reduced}(t)^m.
\]
Moreover, if $\alpha\in \End(X)$, then by Gauss's primitivity lemma, $\chi_{\alpha}^{\reduced}(t) \in \bZ[t]$.
\end{proposition}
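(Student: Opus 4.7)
The plan is to decompose the semisimple algebra $\End^0(X)\otimes_\bQ\ol\bQ$, identify its irreducible representations, and then decompose $V_\ell X\otimes_{\bQ_\ell}\ol\bQ_\ell$ as a module over it. Fix an embedding $\ol\bQ\hookrightarrow\ol\bQ_\ell$. Since $D$ is a central division algebra of dimension $d^2$ over the number field $K$ of degree $e$, extending scalars splits everything: the isomorphism $K\otimes_\bQ\ol\bQ=\prod_{\sigma\colon K\hookrightarrow\ol\bQ}\ol\bQ$ induces
\[
\End^0(X)\otimes_\bQ\ol\bQ=\Mat_n(D)\otimes_\bQ\ol\bQ\isom\prod_{\sigma}\Mat_{dn}(\ol\bQ),
\]
where $\sigma$ ranges over the $e$ embeddings of $K$. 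Each factor admits a unique irreducible representation $V_\sigma$ of dimension $dn$ over $\ol\bQ$, and $V^{\reduced}=\bigoplus_\sigma V_\sigma$ by construction.

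By semisimplicity, one can write $V_\ell X\otimes_{\bQ_\ell}\ol\bQ_\ell=\bigoplus_\sigma V_\sigma^{\oplus m_\sigma}$ as a module over $\End^0(X)\otimes\ol\bQ_\ell$. The crux of the argument, and the step I expect to be the main obstacle, is to show that all multiplicities $m_\sigma$ coincide. For this I would pick a primitive element $\beta\in K$ with $K=\bQ(\beta)$, viewed as a central element of $\End^0(X)$. On each $V_\sigma$, the element $\beta$ acts as the scalar $\sigma(\beta)\cdot\id_{V_\sigma}$, so the characteristic polynomial of $V_\ell\beta$ on $V_\ell X\otimes\ol\bQ_\ell$ equals $\prod_\sigma(t-\sigma(\beta))^{dn\cdot m_\sigma}$. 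By \cref{thm:char-poly} this polynomial coincides with $P_\beta(t)\in\bQ[t]$ and is therefore $\Gal(\ol\bQ/\bQ)$-invariant. Since the conjugates $\{\sigma(\beta)\}_\sigma$ form a single transitive Galois orbit of pairwise distinct elements, the exponents $dn\cdot m_\sigma$ must all be equal. Faithfulness of $V_\ell X$ over $\End^0(X)\otimes\bQ_\ell$, from \cref{thm:l-adic-rep}, guarantees each $m_\sigma\geq 1$; setting $m_\sigma=m$, a dimension count gives $2g=e\cdot dn\cdot m$, i.e., $m=2g/(edn)$.

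For any $\alpha\in\End^0(X)$, the characteristic polynomial of $V_\ell\alpha$ on $V_\ell X\otimes\ol\bQ_\ell$ then factors as $\prod_\sigma\chi_\sigma(\alpha,t)^m$, where $\chi_\sigma(\alpha,t)$ denotes its characteristic polynomial on $V_\sigma$. Matching \cref{def:red-char} to the splitting above identifies $\chi_\alpha^{\reduced}(t)=\prod_\sigma\chi_\sigma(\alpha,t)$, and together with \cref{thm:char-poly} this yields $P_\alpha(t)=\chi_\alpha^{\reduced}(t)^m$. Finally, when $\alpha\in\End(X)$ one has $P_\alpha(t)\in\bZ[t]$ by \cref{thm:char-poly}, while $\chi_\alpha^{\reduced}(t)\in\bQ[t]$ is monic; Gauss's lemma for monic polynomials (a monic rational polynomial whose $m$-th power has integer coefficients must itself have integer coefficients) then forces $\chi_\alpha^{\reduced}(t)\in\bZ[t]$.
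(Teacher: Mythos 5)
The paper gives no internal proof of this proposition; it is stated with a pointer to Milne's Proposition~12.12. Your argument is a correct reconstruction and follows essentially the same route as the cited source: split $\End^0(X)\otimes_\bQ\ol\bQ$ into the $e$ matrix factors indexed by the embeddings $\sigma\colon K\hookrightarrow\ol\bQ$, write $V_\ell X\otimes_{\bQ_\ell}\ol\bQ_\ell\cong\bigoplus_\sigma V_\sigma^{\oplus m_\sigma}$, and use the rationality $P_\alpha(t)\in\bQ[t]$ from \cref{thm:char-poly} together with transitivity of the $\Gal(\ol\bQ/\bQ)$-action on the embeddings of $K$ to force all $m_\sigma$ to coincide. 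Reading the multiplicities off the characteristic polynomial of a primitive element $\beta$ of $K$ (whose $e$ conjugates are pairwise distinct) is a clean and correct way to carry this out. Two minor remarks. First, once the $m_\sigma$ are shown equal, the dimension count $2g=e\,dn\,m$ already gives $m\ge 1$, so the appeal to faithfulness of $V_\ell X$ over $\End^0(X)\otimes\bQ_\ell$ at that step is unnecessary (though true). Second, there is a genuinely different argument available, via Mumford's lemma in \cite[\S 19]{Mumford} that every $\bQ$-valued norm form on $D$ is a power of $\Nm_{K/\bQ}\circ\Nm^{\reduced}_{D/K}$: applying it to the determinant of the Tate-module action yields the identity $P_\alpha(t)=\chi_\alpha^{\reduced}(t)^m$ without ever base-changing to $\ol\bQ$ or discussing multiplicities. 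That route is more economical if one only wants the polynomial identity, whereas your Galois-theoretic version actually produces the stated decomposition of $V_\ell X\otimes\ol\bQ_\ell$ as a representation, so it matches the proposition more faithfully. The final Gauss-lemma step is also fine; alternatively one may note that the roots of the monic integer polynomial $P_\alpha(t)$ are algebraic integers, hence the rational coefficients of its monic factor $\chi_\alpha^{\reduced}(t)$ must be integers.
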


It is not hard to extend \cref{prop:Milne} to the case when $X$ is an arbitrary abelian variety over $\bk$ (which is not necessarily of the form $A^n$).
Indeed, by Poincar\'e's complete reducibility theorem (see, e.g., \cite[\S19, Theorem~1]{Mumford}), $X$ is isogenous to a product $X_1 \times \cdots \times X_s$, where the $X_j = A_j^{n_j}$ are powers of mutually non-isogenous simple abelian varieties $A_j$, and
\[
\End^0(X) \isom \prod_{j=1}^{s} \End^0(X_j) \isom \prod_{j=1}^{s} \Mat_{n_j}(\End^0(A_j)).
\]
Write the image of $\alpha\in \End^0(X)$ as the product $\alpha_1 \times \cdots \times \alpha_s$ with each $\alpha_j \in \End^0(X_j)$.
Let $V_{j,1}, \ldots, V_{j,e_j}$ denote the $e_j$ nonisomorphic irreducible representations of $\End^0(X_j)$ over $\ol{\bQ}$, where each one has degree $d_jn_j$.
Denote by $V_j^{\reduced} \coloneqq V_{j,1} \oplus \cdots \oplus V_{j,e_j}$ the reduced representation of $\End^0(X_j)$.

\begin{proposition}
\label{prop:MilneII}
With notation as above, the representation $T_\ell(X) \otimes_{\bZ_\ell} \ol{\bQ}_\ell = V_\ell(X) \otimes_{\bQ_\ell} \ol{\bQ}_\ell$ of $\End^0(X)$ is isomorphic to a direct sum of the $m_j$ copies of $V_j^{\reduced} \otimes_{\ol{\bQ}} \ol{\bQ}_\ell$ as follows:
\[
V_\ell(X) \otimes_{\bQ_\ell} \ol{\bQ}_\ell \isom \bigoplus_{j=1}^s \, (V_j^{\reduced})^{\oplus m_j} \otimes_{\ol{\bQ}} \ol{\bQ}_\ell,
\]
where $m_j = 2\dim X_j /(e_jd_jn_j)$ is a positive integer depending only on $X_j$ (and $\End^0(X_j)$; see \cref{prop:Milne}).
In particular, for any $\alpha\in \End^0(X)$, we have
\[
\bQ[t] \ni P_\alpha(t) = \prod_{j=1}^s P_{\alpha_j}(t) = \prod_{j=1}^s \chi_{\alpha_j}^{\reduced}(t)^{m_j}.
\]
Moreover, if $\alpha\in \End(X)$, then $\chi_{\alpha_j}^{\reduced}(t) \in \bZ[t]$ for all $j$.
\end{proposition}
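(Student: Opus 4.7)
The plan is to reduce Proposition~\ref{prop:MilneII} to the already-established Proposition~\ref{prop:Milne} by means of Poincaré's complete reducibility theorem. By \cite[\S19, Theorem~1]{Mumford}, there exists an isogeny $\varphi \colon X \to X_1 \times \cdots \times X_s$. Since $\varphi$ has finite kernel, the induced map $V_\ell \varphi \colon V_\ell X \xrightarrow{\sim} \bigoplus_{j=1}^s V_\ell X_j$ is a $\bQ_\ell$-linear isomorphism, and it is equivariant with respect to the ring isomorphism $\End^0(X) \isom \prod_j \End^0(X_j) \isom \prod_j \Mat_{n_j}(\End^0(A_j))$. Consequently, both the sought decomposition of the $\ell$-adic representation and the characteristic polynomial $P_\alpha(t)$ are preserved under replacing $X$ with $\prod_j X_j$, so I may assume $X = \prod_j X_j$ outright and write $\alpha = \alpha_1 \times \cdots \times \alpha_s$ with each $\alpha_j \in \End^0(X_j)$.

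Next, I would apply Proposition~\ref{prop:Milne} factorwise to obtain $\End^0(X_j)$-equivariant isomorphisms $V_\ell X_j \otimes_{\bQ_\ell} \ol{\bQ}_\ell \isom (V_j^{\reduced})^{\oplus m_j} \otimes_{\ol{\bQ}} \ol{\bQ}_\ell$ with $m_j = 2\dim X_j / (e_j d_j n_j)$. Taking direct sums over $j$ produces the claimed decomposition as a module over $\prod_j \End^0(X_j) \isom \End^0(X)$, using the obvious fact that each $\End^0(X_{j'})$ with $j' \ne j$ acts trivially on $V_\ell X_j$. Because $\alpha$ acts block-diagonally on $\bigoplus_j V_\ell X_j$ via the $V_\ell \alpha_j$, the characteristic polynomial factors multiplicatively as $P_\alpha(t) = \prod_j P_{\alpha_j}(t)$, and Proposition~\ref{prop:Milne} then yields $P_{\alpha_j}(t) = \chi_{\alpha_j}^{\reduced}(t)^{m_j}$ for each $j$.

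The delicate point, and the step I expect to require the most care, is the integrality of each $\chi_{\alpha_j}^{\reduced}(t)$ under the hypothesis $\alpha \in \End(X)$. A priori, the components $\alpha_j$ extracted via the isogeny $\varphi$ live in $\End^0(X_j)$ but need not lie in $\End(X_j)$, so the integrality clause of Proposition~\ref{prop:Milne} cannot be invoked factor by factor. I would instead appeal to Gauss's primitivity lemma directly: by Theorem~\ref{thm:char-poly}, $P_\alpha(t) \in \bZ[t]$ is monic, and in $\bQ[t]$ it factors as the product of the monic polynomials $\chi_{\alpha_j}^{\reduced}(t)^{m_j}$; since a monic polynomial in $\bZ[t]$ that admits a factorization into monic polynomials in $\bQ[t]$ necessarily has each such factor in $\bZ[t]$, every $\chi_{\alpha_j}^{\reduced}(t)$ must lie in $\bZ[t]$, as asserted.
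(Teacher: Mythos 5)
Your proposal is correct and follows essentially the same route the paper sketches (in the paragraph preceding the proposition): reduce via Poincar\'e complete reducibility to a product of powers of simple abelian varieties, apply \cref{prop:Milne} factorwise, and deduce integrality from Gauss's primitivity lemma applied to the factorization of $P_\alpha(t) \in \bZ[t]$ into monic $\bQ[t]$-factors. Your explicit observation that the components $\alpha_j$ need only lie in $\End^0(X_j)$ rather than $\End(X_j)$ — so that integrality cannot be inherited directly from the integrality clause of \cref{prop:Milne} and must instead be extracted from $P_\alpha(t) \in \bZ[t]$ globally — is a correct clarification of a point the paper leaves implicit.
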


We recall the following useful structure theorem on the endomorphism $\bR$-algebras of abelian varieties which would be used very frequently.

\begin{theorem}[{cf.~\cite[\S21, Theorems~2 and 6]{Mumford}}]
\label{thm:NS}
The endomorphism $\bR$-algebra $\End(X)_\bR \coloneqq \End(X) \otimes_\bZ \bR$ is isomorphic to a product of copies of $\Mat_r(\bR)$, $\Mat_r(\bC)$, or $\Mat_r(\bH)$.
Moreover, one can fix an isomorphism so that it carries the Rosati involution into the standard involution $\bA \mapsto \bA^* = \ol\bA^\sT$.
In particular, the real N\'eron--Severi space $\NS(X)_\bR \coloneqq \NS(X) \otimes_\bZ \bR$ is isomorphic to a product of Jordan algebras of the following types:
\begin{align*}
\sH_r(\bR) &= r \times r \text{ symmetric real matrices,} \\
\sH_r(\bC) &= r \times r \text{ Hermitian complex matrices,} \\
\sH_r(\bH) &= r \times r \text{ Hermitian quaternionic matrices.}
\end{align*}
\end{theorem}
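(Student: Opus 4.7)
The plan is to combine Poincar\'e's complete reducibility, the Albert classification summarised in Table~\ref{the-table}, and the positivity of the Rosati involution.

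First I would reduce to the isotypic case. By Poincar\'e's complete reducibility (already recalled just above \cref{prop:MilneII}), $X$ is isogenous to $\prod_j A_j^{n_j}$ with pairwise nonisogenous simple $A_j$, and $\End^0(X) \cong \prod_j \Mat_{n_j}(D_j)$ where $D_j \coloneqq \End^0(A_j)$ is a division $\bQ$-algebra. Distinct isotypic components are orthogonal under the Rosati trace pairing $(\alpha,\beta)\mapsto \Tr(\alpha\beta^\dagger)$, so the Rosati involution preserves this product decomposition, and it suffices to treat a single factor $R = \Mat_n(D)$.

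Next I would tensor with $\bR$ and read off the factor type from the Albert data. Let $K$ be the centre of $D$ with $e=[K:\bQ]$ and $d^2=[D:K]$. In Types~I--III the field $K = K_0$ is totally real, so $K\otimes_\bQ\bR \cong \bR^{e_0}$, while in Type~IV the field $K$ is a CM field and $K\otimes_\bQ\bR \cong \bC^{e_0}$. Tensoring the central simple $K$-algebra $D$ over each archimedean place then yields, respectively: $\Mat_n(\bR)^{e_0}$ in Type~I; $\Mat_{2n}(\bR)^{e_0}$ in Type~II, since a totally indefinite quaternion algebra splits at every real place; $\Mat_n(\bH)^{e_0}$ in Type~III, since a totally definite quaternion algebra remains ramified; and $\Mat_{nd}(\bC)^{e_0}$ in Type~IV, using $D\otimes_{K,\sigma}\bC \cong \Mat_d(\bC)$ at each of the $e_0$ complex places of $K$. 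In each case $R\otimes_\bQ\bR$ is a product of matrix algebras of one of the three asserted types.

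The main obstacle is realising the Rosati involution as the standard conjugate-transpose $\bA \mapsto \overline{\bA}^{\sT}$ on each factor. The key input is the classical positivity theorem: the form $(\alpha,\beta)\mapsto \Tr^{\reduced}(\alpha\beta^\dagger)$ is positive definite on $\End^0(X)$, a consequence of $\deg(\alpha\circ\alpha^\dagger) > 0$ against the polarization $\phi_\sL$ for nonzero $\alpha$. On a real simple algebra $R\in\{\Mat_r(\bR),\Mat_r(\bC),\Mat_r(\bH)\}$, the standard conjugate-transpose is an $\bR$-algebra anti-involution, so by Skolem--Noether any other anti-involution differs from it by an inner automorphism; positive-definiteness of the trace form then forces the conjugating element to be positive Hermitian, and a final change of basis makes that inner automorphism trivial. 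Applying this factorwise adjusts the isomorphism so that $\dagger$ becomes the standard involution.

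Finally, the polarization $\phi$ identifies $\NS(X)_\bR$ with the fixed subspace $\{\alpha\in \End(X)_\bR : \alpha^\dagger = \alpha\}$. Under the normalized isomorphism this fixed subspace is the direct sum, over all simple factors, of the real Jordan algebras $\sH_r(\bR)$, $\sH_r(\bC)$, and $\sH_r(\bH)$ of Hermitian matrices, yielding the second assertion.
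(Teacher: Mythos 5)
The paper does not prove \cref{thm:NS}; it states it with a citation to Mumford \cite[\S21, Theorems~2 and~6]{Mumford}, so there is no in-paper argument to compare against, and I am judging your proposal on its own merits and against the classical argument. Your proof is correct, but it runs the logic in the opposite direction from Mumford. Mumford's \S21 Theorem~2 is an abstract structure theorem: any finite-dimensional semisimple $\bR$-algebra with a positive anti-involution is a product of $\Mat_r(\bR)$, $\Mat_r(\bC)$, $\Mat_r(\bH)$ with the standard involution, proved from Wedderburn's theorem and the Frobenius theorem on real division algebras together with Skolem--Noether and positivity; Albert's classification of the $\End^0(A)$ is then \emph{deduced} from that abstract theorem. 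You instead take Albert's classification as given (\Cref{the-table}, which the paper does recall) and base-change each Type~I--IV algebra along the archimedean places of its center to read off the real matrix-algebra factors. Within this paper's framework that is legitimate and somewhat more concrete, though it leans on heavier input for the first half than the direct Wedderburn--Frobenius route. Your handling of the involution --- Skolem--Noether reduces $\dagger$ and the standard $*$ to an inner twist, positivity of the trace form forces the twisting element to be positive Hermitian, and a change of basis removes it --- and your identification of $\NS(X)_\bR$ with the $\dagger$-symmetric elements both match the standard treatment. One small point worth making explicit in the Skolem--Noether step: on a simple factor of the form $\Mat_r(\bC)$ you need $\dagger$ to induce complex conjugation on the center $\bC$, so that $\dagger\circ *$ is a $\bC$-algebra automorphism and Skolem--Noether applies over $\bC$. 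This too is a consequence of positivity, since a $\bC$-linear involution would give $\Tr(z z^\dagger) = r z^2$ for central $z$, which is not a positive real in general.
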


For convenience, we include the following \Cref{the-table}, which gives the numerical constraints on the endomorphism $\bQ$-algebras of simple abelian varieties $A$; see, e.g., \cite[p.~202]{Mumford}.
Recall that $e$, $d$, and $e_0$ are positive integers defined in \cref{eq:ed}.

\begin{table}[!htbp]
\caption{Types of simple abelian varieties}
\label{the-table}
\renewcommand{\arraystretch}{1.2}
\begin{tabular}{|c|c|c|c|c| >{\centering\arraybackslash} m{230pt}|}
\hline \hline
\multirow{2}{*}{Type} & \multirow{2}{*}{$e$} & \multirow{2}{*}{$d$} & \multicolumn{2}{c|}{Restriction} & \multirow{2}{*}{$D = \End^0(A)$ with $g = \dim A$} \\
\cline{4-5}
& & & in char $0$ & in char $p$ & \\
\hline
\hline
I($e$) & $e_0$ & $1$ & $e|g$ & $e|g$ & $D = K = K_0$ is a totally real field \\
\hline
II($e$) & $e_0$ & $2$ & $2e|g$ & $2e|g$ & $D$ is a totally indefinite quaternion algebra over the totally real field $K = K_0$ \\
\hline
III($e$) & $e_0$ & $2$ & $2e | g$ & $e|g$ & $D$ is a totally definite quaternion algebra over the totally real field $K = K_0$ \\
\hline
IV($e_0, d$) & $2e_0$ & $d$ & $e_0d^2|g$ & $e_0d | g$ & $D$ is an (Albert) algebra over the CM-field $K \supset K_0$ \\
\hline
\hline
\end{tabular}
\end{table}


\section{Proof of Theorem \ref{thm:B}}
\label{section:proof-B}


\begin{proof}[Proof of \cref{thm:B}]
\label{proof:thm-B}
By Poincar\'e's complete reducibility theorem, it suffices to consider the case when $X$ is a power of a simple abelian variety $A$, say $X = A^n$, and $\alpha \in \End^0(X)$.
Indeed, with notation as in \cref{prop:MilneII}, suppose that for each $\alpha_j \in \End^0(X_j)$ we have found a complex polynomial $P_{\alpha_j}^{\Albert}(t)$ such that $P_{\alpha_j}(t) = P_{\alpha_j}^{\Albert}(t)  \overline{P_{\alpha_j}^{\Albert}(t)}$.
Note that here the characteristic polynomial $P_{\alpha_j}(t)$ of each $\alpha_j \in \End^0(X_j)$ is defined as $k_j^{-2\dim X_j}P_{k_j \alpha_j}(k_j t)$ if $k_j \alpha_j\in \End(X_j)$ (see \cite[Proposition~12.4]{Milne86}).
Then we simply take $\prod_{j=1}^s P_{\alpha_j}^{\Albert}(t)$ as our $P_\alpha^{\Albert}(t)$.
Now, after the above reduction, $\End^0(X)$ is isomorphic to the simple $\bQ$-algebra $\Mat_n(D)$ of all $n\times n$ matrices with entries in the division ring $D \coloneqq \End^0(A)$.
Let $K$ denote the center of $D$ which is a field, and $K_0$ the maximal totally real subfield of $K$.
As usual, we set
\[
d^2 = [D:K], \ e = [K:\bQ], \ \text{ and } \ e_0 = [K_0:\bQ].
\]
Let $\chi_{\alpha}^{\reduced}(t) \in \bQ[t]$ be the reduced characteristic polynomial of $\alpha$ (see \cref{def:red-char}).
Then by \cref{prop:Milne}, we have
\begin{equation}
\label{eq:char-poly}
P_\alpha(t) = \chi_{\alpha}^{\reduced}(t)^m,
\end{equation}
where $m = 2g / edn = 2 \dim A / ed$ is a positive integer.
We also note that
\[
\End(X)_\bR \coloneqq \End^0(X) \otimes_\bQ \bR \isom \Mat_n(D) \otimes_\bQ \bR \isom \Mat_n(D \otimes_\bQ \bR)
\]
is isomorphic to a product of copies of $\Mat_r(\bR)$, $\Mat_r(\bC)$, or $\Mat_r(\bH)$ (see \cref{thm:NS}).
According to Albert's classification of the endomorphism $\bQ$-algebras of simple abelian varieties (see, e.g., \cite[\S21, Theorem~2]{Mumford}), we have the following four cases.

\begin{case}
\label{case-I}
$D$ is of Type I$(e)$: $d=1$, $e=e_0$ and $D=K=K_0$ is a totally real algebraic number field and the involution on $D$ is the identity.
In this case, we have the following restriction:
\[
e \, | \dim A
\]
in any characteristic (see \Cref{the-table}).
It follows that the $m$ in the equation \eqref{eq:char-poly} is an even number.
Since $\chi_{\alpha}^{\reduced}(t)$ lies in $\bQ[t]$ by the definition of reduced characteristic polynomials, we simply take
\[
P_\alpha^{\Albert}(t) = \chi_{\alpha}^{\reduced}(t)^{m/2}.
\]
\end{case}

\begin{case}
\label{case-II}
$D$ is of Type II$(e)$: $d=2$, $e=e_0$, $K=K_0$ is a totally real algebraic number field and $D$ is an indefinite quaternion division algebra over $K$.
In this case, we have $2e \, | \dim A$ in any characteristic (see \Cref{the-table}) so that $m$ is also even.
We can take the same $P_\alpha^{\Albert}(t)$ as in \cref{case-I} because $\chi_{\alpha}^{\reduced}(t)$ has only rational coefficients.
\end{case}

\begin{case}
\label{case-III}
$D$ is of Type III$(e)$: $d=2$, $e=e_0$, $K=K_0$ is a totally real algebraic number field and $D$ is a definite quaternion division algebra over $K$.
In this case, we have $2e \, | \dim A$ in characteristic zero and $e \, | \dim A$ in positive characteristic (see \Cref{the-table}).
The characteristic zero case is exactly the same as before.
However, when the characteristic of the ground field $\bk$ is positive, the restriction $e \, | \dim A$ does not automatically guarantee the parity of $m$ as in the previous cases (e.g., it may happen that $e = \dim A$ and hence $m=1$; see \cref{eg:TypeIII}).
Nevertheless, we shall construct $P_{\alpha}^{\Albert}(t)$ using certain special property of quaternion algebras.

First, we have the following isomorphism
\[
\End(X)_\bR \isom \bigoplus_{i=1}^{e} \Mat_n(\bH),
\]
where $\bH \coloneqq \big(\frac{-1, \, -1}{\bR} \big)$ is the standard quaternion algebra over $\bR$.
Clearly, $\bH$ can be embedded, in a standard way (see, e.g., \cite[Example~9.4]{Reiner03}), into $\Mat_2(\bC) \isom \bH \otimes_\bR \bC$.
This would induce a natural embedding of $\Mat_n(\bH)$ into $\Mat_{2n}(\bC) \isom \Mat_n(\bH) \otimes_\bR \bC$ as follows (see, e.g., \cite[\S4]{Lee49}):
\[
\iota \colon \Mat_n(\bH) \longinjmap \Mat_{2n}(\bC) \quad \text{via} \quad \bA = \bA_1 + \bA_2 \, \bbj \longmapsto \iota (\bA) \coloneqq 
\begin{pmatrix}
\bA_1 & \bA_2 \\
-\overline{\bA}_2 & \overline{\bA}_1
\end{pmatrix}.
\]
In particular, a quaternionic matrix $\bA$ is Hermitian if and only if its image $\iota(\bA)$ is a Hermitian complex matrix.

Denote the image $\alpha \otimes_\bQ 1_\bR$ of $\alpha$ in $\End(X)_\bR$ by block diagonal matrix $\bA_\alpha = \bA_{\alpha, 1} \oplus \cdots \oplus \bA_{\alpha, e}$ with each $\bA_{\alpha, i} \in \Mat_n(\bH)$.
Also, we note that
\[
\End(X)_\bC \coloneqq \End(X)_\bR \otimes_\bR \bC \isom \bigoplus_{i=1}^{e} \Mat_{2n}(\bC),
\]
is a semisimple $\bC$-algebra with each summand $\Mat_{2n}(\bC)$ being a central simple $\bC$-algebra.
Then by \cref{def:red-char,rmk:red-char}, the reduced characteristic polynomial $\chi_{\alpha}^{\reduced}(t)$ of $\alpha$ is equal to the product of the characteristic polynomials $\det(t \, \bI_{2n} - \iota(\bA_{\alpha, i}))$ of $\iota(\bA_{\alpha, i})$.
Thanks to \cite[Theorem~5]{Lee49}, the $2n$ complex eigenvalues of each $\iota(\bA_{\alpha, i})$ fall into $n$ pairs with each pair consisting of two conjugate complex numbers;
regardless the multiplicity, denote them by $\pi_{i,1}, \ldots, \pi_{i,n}, \overline{\pi}_{i,1}, \ldots, \overline{\pi}_{i,n}$.
In fact, one can easily verify that if $\pi_{i,j} \in \bC$ is an eigenvalue of $\iota(\bA_{\alpha,i})$ so that
\[
\iota(\bA_{\alpha,i}) \begin{pmatrix}
\bu_{i,j} \\
\bv_{i,j}
\end{pmatrix}
= \pi_{i,j} \begin{pmatrix}
\bu_{i,j} \\
\bv_{i,j}
\end{pmatrix},
\text{ then }
\iota(\bA_{\alpha,i}) \begin{pmatrix}
-\ol\bv_{i,j} \\
\ol\bu_{i,j}
\end{pmatrix}
= \ol\pi_{i,j} \begin{pmatrix}
-\ol\bv_{i,j} \\
\ol\bu_{i,j}
\end{pmatrix},
\]
i.e., $\ol\pi_{i,j}$ is also an eigenvalue of $\iota(\bA_{\alpha,i})$ corresponding to the eigenvector $(-\ol\bv_{i,j}^\sT, \ol\bu_{i,j}^\sT)^\sT$.
This yields that
\[
\chi_{\alpha}^{\reduced}(t) = \prod_{i=1}^{e} \prod_{j=1}^{n} (t - \pi_{i,j})(t - \overline{\pi}_{i,j}).
\]

Clearly, by the equation \eqref{eq:char-poly},
\[
P_{\alpha}^{\Albert}(t) \coloneqq \prod_{i=1}^{e} \prod_{j=1}^{n} (t - \pi_{i,j})^m \in \bC[t]
\]
is what we want, though the choice of $\pi_{i,j}$ or $\ol\pi_{i,j}$ may not be canonical (see \cref{eg:TypeIII}).
\end{case}

\begin{case}
\label{case-IV}
$D$ is of Type IV$(e_0, d)$: $e=2e_0$ and $D$ is a division algebra over the CM-field $K \supsetneq K_0$
(i.e., $K$ is a totally imaginary quadratic extension of a totally real algebraic number field $K_0$).
In this case, neither the restriction $e_0d^2 \, | \dim A$ in characteristic zero nor the restriction $e_0d \, | \dim A$ in characteristic $p$ ensures the parity of the integer $m$ as in \cref{case-I,case-II}.
However, this last remaining case is also special enough so that the reduced characteristic polynomial of $\alpha$ is canonically equal to the product of a complex polynomial and its complex conjugate (bearing some similarity with \cref{case-III}).

In fact, in this case, the endomorphism $\bR$-algebra
\[
\End(X)_\bR \isom \bigoplus_{i=1}^{e_0} \Mat_{dn}(\bC),
\]
so that the image $\alpha \otimes_\bQ 1_\bR$ of $\alpha$ in $\End(X)_\bR$ could be represented by the block diagonal matrix $\bA_\alpha = \bA_{\alpha, 1} \oplus \cdots \oplus \bA_{\alpha, e_0}$ with each $\bA_{\alpha, i} \in \Mat_{dn}(\bC)$.
We now need to note that $\End(X)_\bR$ is a semisimple $\bR$-algebra, while the center of each component $\Mat_{dn}(\bC)$ is $\bC$.
Then by \cref{def:red-char,rmk:red-char}, the reduced characteristic polynomial $\chi_{\alpha}^{\reduced}(t)$ of $\alpha$ is equal to
\[
\prod_{i=1}^{e_0} \Nm_{\bC/\bR} (\det(t \, \bI_{dn} - \bA_{\alpha, i}) ) = \prod_{i=1}^{e_0} \det(t \, \bI_{dn} - \bA_{\alpha, i}) \cdot \det(t \, \bI_{dn} - \overline{\bA}_{\alpha, i}).
\]
By the equation \eqref{eq:char-poly} again, we now just take
\[
P_\alpha^{\Albert}(t) = \prod_{i=1}^{e_0} (\det(t \, \bI_{dn} - \bA_{\alpha, i}) )^m.
\]
\end{case}

We thus complete the proof of \cref{thm:B}.
\end{proof}

\begin{remark}
\label{rmk:B2}
It follows from the proof of \cref{thm:B} that if all factors $X_j$ of $X$ are of Type~I$(e)$ or II$(e)$ then the Albert polynomial $P_\alpha^{\Albert}(t)$ of a genuine endomorphism $\alpha \in \End(X)$ has integer coefficients.
In fact, to reduce potential inaccuracies, let us start with \cref{prop:MilneII} from the beginning.
So for each factor $X_j$ of Type I$(e)$ or II$(e)$, we have shown that
\[
P_{\alpha_j}^{\Albert}(t) = \chi_{\alpha_j}^{\reduced}(t)^{m_j/2} \in \bQ[t]
\]
is monic, where $m_j$ is an appropriate positive even integer depending only on $X_j$.
Now, the Albert polynomial $P_\alpha^{\Albert}(t)$ of $\alpha \in \End(X)$ constructed in \cref{thm:B} is just the product
\[
\prod_{j=1}^s P_{\alpha_j}^{\Albert}(t),
\]
so is a monic polynomial with rational coefficients.
Note, however, that $P_\alpha^{\Albert}(t)^2 = P_\alpha(t) \in \bZ[t]$.
This yields that all $P_{\alpha_j}^{\Albert}(t)$ and hence $P_\alpha^{\Albert}(t)$ itself have only integer coefficients either by Gauss's primitivity lemma or by noting that all roots are algebraic integers.
\end{remark}

\begin{remark}
\label{rmk:B3}
\begin{enumerate}[(1)]
\item Our construction of an Albert polynomial $P_\alpha^{\Albert}(t)$ is canonical in characteristic zero, but unfortunately, not canonical in positive characteristic once the endomorphism $\bQ$-algebra of Type~III$(e)$ occurs,
in which case our \cref{eg:TypeIII} of supersingular elliptic curves reveals that the canonical construction of an Albert polynomial seems not likely to exist.
This seems to be a big difference between characteristic zero and prime characteristic.
We also hope that there exists an intrinsic and classification-free construction.
Nevertheless, it turns out that for a symmetric endomorphism $\alpha$, its Albert polynomial is unique (see \cref{lemma:unique}).

\item It could happen that $P_\alpha(t)$ has no multiple roots at all.
Indeed, let $X$ be an abelian variety of dimension $g$ over a finite field $\bF_q$, and $\Frob$ the Frobenius endomorphism of $X$ relative to $\bF_q$.
Then according to \cite[\S3, Theorem~2]{Tate66}, the characteristic polynomial $P_{\,\Frob}(t)$ of $\Frob$ has no multiple root if and only if $\End^0(X) = \bQ[\Frob]$ is commutative.
For instance, we can choose $X$ as a simple CM abelian variety of Type IV$(g, 1)$, i.e., $\End^0(X)$ is a CM-field of degree $2g$ over $\bQ$.
See also \cite[Remark~5.2]{Oort88} for a discussion on the realization.
\end{enumerate}
\end{remark}

The example below indicates that in positive characteristic the canonical construction of an Albert polynomial does not seem likely to exist in \cref{case-III}.
Note that by \cite[Proposition~4.2]{Oort88}, if a simple abelian variety $A$ of dimension $g$ is of Type III$(g)$, it must be a supersingular elliptic curve, i.e., $g = 1$.
See also \cite[\S22, pp.~214--218]{Mumford} for a discussion about elliptic curves in positive characteristic.

\begin{example}
\label{eg:TypeIII}
Let $E$ be a supersingular elliptic curve over an algebraically closed field of positive characteristic, i.e., $\End^0(E)$ is of Type III$(1)$.
Equivalently, $\End(E) \otimes_\bZ \bR$ is the standard quaternion algebra $\bH$ over $\bR$, whose basis is $\{ 1, \bbi, \bbj, \bbk\}$.
Let $\alpha$ be an endomorphism of $E$ such that $\alpha \otimes_\bZ 1_\bR = a + b \, \bbi + c \, \bbj + d \, \bbk \in \bH$.
Clearly, the characteristic polynomial $P_\alpha(t)$ of $\alpha$ and the reduced characteristic polynomial $\chi_\alpha^{\reduced}(t)$ of $\alpha$ coincide.
Both of them are equal to
\[
t^2 - 2at + (a^2+b^2+c^2+d^2) = (t - \pi_1)(t - \ol\pi_1) \in \bR[t].
\]
Hence either $t - \pi_1$ or its complex conjugate is an Albert polynomial of $\alpha$ by \cref{def:Albert}.
In other words, there seems no way to obtain a canonical choice for $P_\alpha^{\Albert}(t)$ if $b^2+c^2+d^2 \ne 0$.
\end{example}


\section{Proof of Corollaries \ref{cor:A} and \ref{cor:B}}
\label{section:proof-cor}


\begin{proof}[Proof of \cref{cor:A}]
Let $\Alb(X)$ denote the Albanese variety of $X$, which is an abelian variety over $\bk$ of dimension equal to the geometric irregularity $q(X) \coloneqq b_1(X)/2$.
Let $\alb_X$ denote the Albanese morphism from $X$ to $\Alb(X)$.
Then there exists an induced morphism $a_f \colon \Alb(X) \to \Alb(X)$ such that $\alb_X$ is $f$-equivariant, i.e., $\alb_X \circ f = a_f \circ \alb_X$.
Note that the Albanese morphism $\alb_X$ also induces an isomorphism
\[
\alb_X^* \colon H^1_{\et}(\Alb(X), \bQ_\ell) \xrightarrow{~\sim~} H^1_{\et}(X, \bQ_\ell)
\]
between the first \'etale cohomology groups via the pullback (see, e.g., \cite[Theorem~2A9]{Kleiman68}).
We thus have the following commutative diagram of isomorphic $\bQ_\ell$-vector spaces:
\[
\xymatrix{
H^1_{\et}(\Alb(X), \bQ_\ell) \ar[rr]^{a_f^*} \ar[d]_{\alb_X^*}^{\isom} & & H^1_{\et}(\Alb(X), \bQ_\ell) \ar[d]^{\alb_X^*}_{\isom} \\
H^1_{\et}(X, \bQ_\ell) \ar[rr]^{f^*} & & H^1_{\et}(X, \bQ_\ell).
}
\]
This yields that
\[
f^*|_{H^1_{\et}(X, \bQ_\ell)} \sim a_f^*|_{H^1_{\et}(\Alb(X), \bQ_\ell)},
\]
where $\sim$ means that these two linear transformations are similar.
Hence both have the same characteristic polynomial satisfying the required property by \cref{thm:B}.
The case $i=2n-1$ follows readily from Poincar\'e duality.
\end{proof}

Before proving the next corollary, we prepare a lemma on Albert polynomials of symmetric endomorphisms of abelian varieties with respect to the fixed polarization $\phi = \phi_\sL \colon X \to \what X$.

\begin{lemma}
\label{lemma:unique}
Let $\alpha \in \End^0(X)$.
Suppose that $\alpha$ is symmetric under the Rosati involution, i.e., $\alpha^\dagger = \alpha$.
Then its Albert polynomial $P_\alpha^{\Albert}(t) \in \bR[t]$ is unique.
\end{lemma}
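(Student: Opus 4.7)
The plan is to exploit the structure theorem (Theorem~\ref{thm:NS}) together with classical spectral theory. By Theorem~\ref{thm:NS}, the Rosati involution on $\End(X)_\bR$ becomes the standard conjugate-transpose involution $\bA \mapsto \bar\bA^\sT$ under the identification of $\End(X)_\bR$ with a product of matrix algebras $\Mat_r(\bR)$, $\Mat_r(\bC)$, and $\Mat_r(\bH)$. Hence the assumption $\alpha^\dagger = \alpha$ translates into saying that the image of $\alpha$ in each factor is Hermitian in the appropriate sense (real symmetric, complex Hermitian, or quaternionic Hermitian).

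First I would reduce, as in the proof of \cref{thmB}, to the case $X = A^n$ with $A$ simple, and inspect the four Albert types. The key observation is that in each case, the matrices whose characteristic polynomials assemble $\chi_\alpha^{\reduced}(t)$ have only real eigenvalues: in Types~I and II the representing matrix is a real symmetric matrix in $\Mat_r(\bR)$; in Type~IV it is a complex Hermitian matrix in $\Mat_{dn}(\bC)$; and in Type~III the quaternionic Hermitian matrix $\bA_{\alpha,i} \in \Mat_n(\bH)$ is mapped by the embedding $\iota$ to a complex Hermitian matrix in $\Mat_{2n}(\bC)$. In every case, classical spectral theory then yields real eigenvalues, so the Albert polynomial $P_\alpha^{\Albert}(t)$ constructed in the proof of \cref{thmB} already lies in $\bR[t]$, proving existence of a real Albert polynomial.

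For uniqueness, note that any $Q(t) \in \bR[t]$ satisfying $Q(t)\overline{Q(t)} = P_\alpha(t)$ also satisfies $Q(t)^2 = P_\alpha(t)$. Since all roots of $P_\alpha(t)$ are real (by the argument above) and every real root of $P_\alpha(t)$ has even multiplicity (by \cref{thmB}), we can write $P_\alpha(t) = \prod_i (t - r_i)^{2k_i}$ with $r_i \in \bR$ distinct; the only monic polynomial in $\bR[t]$ whose square equals $P_\alpha(t)$ is $\prod_i (t - r_i)^{k_i}$, forcing $Q(t) = P_\alpha^{\Albert}(t)$.

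The main subtle point is in Case~III, where one must verify that the embedding $\iota\colon \Mat_n(\bH) \injmap \Mat_{2n}(\bC)$ carries quaternionic Hermitian matrices to complex Hermitian matrices. Writing $\bA = \bA_1 + \bA_2\,\bbj$ with $\bA_1, \bA_2$ complex matrices, one checks that the condition $\bA = \bar\bA^\sT$ is equivalent to $\bA_1$ being complex Hermitian and $\bA_2$ being complex skew-symmetric, from which $\iota(\bA)^* = \iota(\bA)$ follows by direct block computation using the explicit form of $\iota$ recorded in the proof of \cref{thmB}.
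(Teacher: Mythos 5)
Your proposal follows essentially the same route as the paper: invoke \cref{thm:NS} to identify the Rosati involution with the conjugate-transpose, observe that each block of $\bA_\alpha$ is real symmetric, complex Hermitian, or quaternionic Hermitian (with the last embedding into a complex Hermitian matrix via $\iota$), conclude all roots of $P_\alpha(t)$ are real, and then extract uniqueness from the even multiplicities. The only differences are cosmetic: you verify the $\iota$-compatibility in Case III directly rather than citing \cite[Theorem~4]{Lee49}, and you spell out the uniqueness step (including, implicitly, that any $Q$ with $Q\overline Q = P_\alpha$ must itself be real once all roots of $P_\alpha$ are), which the paper leaves as "follows readily."
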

\begin{proof}
The lemma basically follows from \cref{thm:B} which asserts the existence of $P_\alpha^{\Albert}(t)$ and the fact that a symmetric/Hermitian matrix has only real eigenvalues.
More precisely, according to \cref{thm:NS}, we could denote the corresponding block diagonal matrix of $\alpha \otimes_\bQ 1_\bR$ by $\bA_\alpha = \bigoplus_i \bA_{\alpha,i}$, where each $\bA_{\alpha,i}$ is either a symmetric real matrix, a Hermitian complex matrix, or a Hermitian quaternionic matrix.
In the first two cases, we know that all eigenvalues of $\bA_{\alpha,i}$ are real numbers.
For the last case, if $\bA_{\alpha,i}\in \sH_r(\bH)$, then the natural embedding $\iota(\bA_{\alpha,i})$ of $\bA_{\alpha,i}$ in $\Mat_{2r}(\bC)$ is a Hermitian complex matrix and hence has only real eigenvalues as well (see, e.g., \cite[Theorem~4]{Lee49}).
Therefore, by \cref{prop:Milne}, all complex roots of the characteristic polynomial $P_\alpha(t)$ are real.
It follows that our Albert polynomial $P_\alpha^{\Albert}(t)$ constructed in \cref{thm:B} lies in $\bR[t]$.
The uniqueness of $P_\alpha^{\Albert}(t)$ thus follows readily.
\end{proof}

\begin{proof}[Proof of \cref{cor:B}]
Thanks to \cref{lemma:unique}, $P_{\alpha^\dagger \circ \alpha}^{\Albert}(t) \in \bR[t]$ is unique because $\alpha^\dagger \circ \alpha = (\alpha^\dagger \circ \alpha)^\dagger$ is symmetric.
Clearly, by the definition of the Albert polynomial (see \cref{def:Albert,rmk:overQ}), we have $P_{\alpha^\dagger \circ \alpha}(t) = P_{\alpha^\dagger \circ \alpha}^{\Albert}(t)^2$.
Note that $P_{\alpha^\dagger \circ \alpha}(t)$ itself lies in $\bQ[t]$.
Then one can easily deduce that $P_{\alpha^\dagger \circ \alpha}^{\Albert}(t) \in \bQ[t]$ by the induction and comparing the coefficients.
Indeed, the constant term $P_{\alpha^\dagger \circ \alpha}(0) = \deg (\alpha^\dagger \circ \alpha) = (\deg \alpha)^2$ is a square (see \cref{thm:char-poly}), which yields that the constant term of $P_{\alpha^\dagger \circ \alpha}^{\Albert}(t)$ is $\pm \deg \alpha \in \bZ$.
The first part of the corollary follows (though one can also see rationality of coefficients from below).

For the second part, note that the characteristic polynomial $P_{\alpha^\dagger \circ \alpha}(t)$ of $\alpha^\dagger \circ \alpha$ is now just the square of its Albert polynomial $P_{\alpha^\dagger \circ \alpha}^{\Albert}(t)$.
As usual, we denote the Euler characteristic of a coherent sheaf $\sF$ of $\sO_X$-modules by $\chi(\sF) \coloneqq \sum_{i=0}^{g} (-1)^i \dim_\bk H^i(X, \sF)$.
Then by the Riemann--Roch theorem (see, e.g., \cite[\S16]{Mumford}), we have
\begin{align*}
\chi(\sL^n \otimes \alpha^* \! \sL^{-1})^2 &= \deg \phi_{\sL^n \otimes \alpha^* \! \sL^{-1}} = \deg (n \, \phi_\sL - \phi_{\alpha^* \! \sL}) \\
&= \deg (n \, \phi_\sL - \what \alpha \circ \phi_\sL \circ \alpha) \\
&= \deg(n \, \phi_\sL - \phi_\sL \circ \alpha^\dagger \circ \alpha) \\
&= \deg \phi_\sL \cdot \deg(n_X - \alpha^\dagger \circ \alpha) \\
&= \chi(\sL)^2 \, P_{\alpha^\dagger \circ \alpha}(n) \\
&= (\chi(\sL) \, P_{\alpha^\dagger \circ \alpha}^{\Albert}(n))^2.
\end{align*}
For the equality $\phi_{\alpha^* \! \sL} = \what \alpha \circ \phi_\sL \circ \alpha$, see \cite[\S15, Theorem~1]{Mumford};
for the second-last equality, see also \cref{thm:char-poly}.
Note that $\sL \coloneqq \sO_X(H_X)$ is an ample line bundle on $X$.
Hence, for sufficiently large $n$, the line bundle $\sL^n \otimes \alpha^* \! \sL^{-1}$ is still ample.
It follows that $\chi(\sL^n \otimes \alpha^* \! \sL^{-1})$ coincides with $\chi(\sL) \, P_{\alpha^\dagger \circ \alpha}^{\Albert}(n)$ as polynomials in $n$ since $P_{\alpha^\dagger \circ \alpha}^{\Albert}(t) (\in \bQ[t])$ is monic.
Applying the Riemann--Roch theorem again, we have
\begin{align*}
P_{\alpha^\dagger \circ \alpha}^{\Albert}(n) &= \frac{\, \chi(\sL^n \otimes \alpha^* \! \sL^{-1}) \, }{\chi(\sL)} = \frac{\, (nH_X - \alpha^* H_X)^g/g! \, }{H_X^g/g!} \\
&= \sum_{k=0}^g (-1)^k \binom{g}{k} \frac{\, \alpha^* H_X^k \cdot H_X^{g-k} \, }{H_X^g} \, n^{g-k}.
\end{align*}
The description about $c_k$ follows by comparing the coefficients.
\end{proof}


\section{Proof of Theorem \ref{thm:A}}
\label{section:proof-A}


The following lemma should be well known.
We include its proof here for the convenience of the reader.
It says that \cref{conj:DDC} has an affirmative answer for complex varieties.

\begin{lemma}
\label{lemma:dg-overC}
Let $f\colon X\ratmap X$ be a dominant rational self-map of a smooth complex projective variety $X$ of dimension $n$.
Then for any $0 \le k \le n$, we have $\chi_{2k}(f)_{\iota} = \lambda_k(f)$.
\end{lemma}
\begin{proof}
By Artin's comparison theorem, in the definition of the $i$-th cohomological dynamical degree $\chi_i$ we can replace the \'etale cohomology $H^i_{\et}(X, \bQ_\ell)$ by the singular/Betti cohomology and hence the de Rham cohomology $H^i_{\dR}(X, \bC)$.
It is well-known that the last one admits the following Hodge decomposition
\[
H^i_{\dR}(X, \bC) = \bigoplus_{p+q=i} H^{p,q}(X, \bC),
\]
where $H^{p,q}(X, \bC) \isom H^q(X, \Omega_X^p)$.
Let $d_k(f)$ denote the usual $k$-th dynamical degree of $f$ in the context of complex dynamics, i.e.,
\begin{align*}
d_k(f) &\coloneqq \limsup_{m\to \infty} \big\|(f^m)^*|_{H^{k,k}(X, \bC)}\big\|^{1/m} \\
&= \lim_{m\to \infty} \big( (f^m)^* \omega_X^k \cdot \omega_X^{n-k} \big)^{1/m},
\end{align*}
where $\omega_X$ is a K\"ahler class on $X$ (see \cite[Corollaire~7]{DS05a} or \cite[Theorem~4.2]{DS17}).
Note that the above $d_k(f)$ does not depend on the choice of $\omega_X$ since the K\"ahler cone is open.
On the other hand, it is known that the $k$-th numerical dynamical degree $\lambda_k(f)$ of $f$ is also equal to
\[
\lim_{m\to \infty} \big( (f^m)^* H_X^k \cdot H_X^{n-k} \big)^{1/m},
\]
where $H_X$ is an ample divisor on $X$ (see \cite{Dang20,Truong20}).
By the openness of the ample cone, our $\lambda_k(f)$ is independent of the choice of $H_X$ too.
This yields that $\lambda_k(f) = d_k(f)$ for any $k$.

It remains to show the equality $d_k(f) = \chi_{2k}(f)$.
First, by the Hodge decomposition, we have
\[
\chi_{2k}(f) = \max_{p+q=2k} r_{p,q}(f) \ge r_{k,k}(f) = d_k(f),
\]
where
\[
r_{p,q}(f) \coloneqq \limsup_{m\to \infty} \big\|(f^m)^*|_{H^{p,q}(X, \bC)}\big\|^{1/m}.
\]
On the other hand, Dinh's inequality \cite[Proposition~5.8]{Dinh05} (see also \cite[Remark~3.3]{DS08}) asserts that $r_{p,q}(f) \le \sqrt{d_p(f) d_q(f)}$.
We thus have
\[
\chi_{2k}(f) \le \max_{p+q=2k} \sqrt{d_p(f) d_q(f)} \le d_k(f).
\]
The last inequality follows from the log-concavity property of dynamical degrees: the function $k \mapsto \log d_k(f)$ is concave in $k$ (see, e.g., \cite[\S4]{DS17}).
Equivalently, we have
\[
d_{k-1}(f)d_{k+1}(f)\le d_k(f)^2, \text{ for any } 1\le k\le n-1.
\]
We thus prove \cref{lemma:dg-overC}.
\end{proof}

\begin{lemma}
\label{thm:P}
Let $X$ be a smooth projective variety of dimension $n$ over $\bk$.
Let $f$ be a polarized endomorphism of $X$, i.e., $f^*H_X \sim_\bQ q H_X$ for some ample divisor $H_X$ on $X$ and an integer $q > 0$.
Then for any $0\le k \le n$, the pullback $f^*$ on $\N^k(X)_\bR$ is diagonalizable with all eigenvalues of modulus $q^k$;
in particular, the $k$-th numerical dynamical degree $\lambda_k(f) = q^k$.
\end{lemma}
\begin{proof}
Let $\Nef^k(X)$ denote the nef cone in $\N^k(X)_\bR$ consisting of nef classes of codimension $k$ on $X$, which is a salient closed convex cone of full dimension preserved by the pullback $f^*$ (see \cite[\S2.3]{FL17a} or \cite[\S2.2.2]{Hu20a}).
Note that the $k$-th self-intersection $H^k$ of the ample divisor $H$ is an interior point in $\Nef^k(X)$ satisfying that $f^*H^k = q^k H^k$ in $\N^k(X)_\bR$.
The lemma thus follows by applying \cite[Proposition~2.9]{MZ18a} to the nef cone $\Nef^k(X) \subset \N^k(X)_\bR$.
\end{proof}

\begin{remark}
\label{rmk:polarized}
In the above lemma, it would be a challenge to show that all eigenvalues of the pullback $f^*$ on $H^i_{\et}(X, \bQ_\ell)$ have absolute value $q^{i/2}$.
When $\bk \subseteq \bC$, this is true by a result due to Serre \cite{Serre60} using Hodge theory.
In positive characteristic, it is a consequence of the standard conjectures (in particular, of Lefschetz type and of Hodge type; see, e.g., \cite[\S2 and \S3]{Kleiman68}, respectively).
See \cite[\S4]{Kleiman68} for more details.
Without assuming the standard conjectures, to the best of our knowledge, it is only known in the following cases:
\begin{enumerate}[(i)]
\item $X$ is a curve or an abelian variety due to Weil's pioneering work \cite{Weil48};
\item $X = X_0 \times_{\bF_q} \ol\bF_q$ for some $X_0$ over $\bF_q$ and $f=\Frob$ is the Frobenius endomorphism of $X$ relative to $\bF_q$ by Deligne \cite{Deligne74}.
\end{enumerate}
Note that it is still open even for arbitrary polarized endomorphisms of ($K3$) surfaces in positive characteristic.
Recently, in joint work with Truong \cite{HT}, we verify this for Kummer surfaces using dynamical correspondences.
\end{remark}

Towards the proof of \cref{thm:A} we still need one more lemma below, which relates the asymptotic behavior of the singular values of the power matrix $\bA^m$ to the spectrum of $\bA$.
See also \cite{Yamamoto67} for a different proof using compound matrices.
Recall that the {\it singular values} of a square matrix $\bA \in \Mat_n(\bC)$ are the square roots of the eigenvalues of $\bA^*\bA$ (or equivalently, $\bA \bA^*$), where $\bA^*$ is the Hermitian transpose of $\bA$.
As a convention, we always denote by $\sigma_i(\bM)$ the $i$-th largest singular value of a general complex matrix $\bM$.

\begin{lemma}[{cf.~\cite[Theorem~1]{Yamamoto67}}]
\label{lemma:non-diag}
Let $\bA \in \Mat_n(\bC)$, whose eigenvalues are $\pi_1, \ldots, \pi_n \in \bC$ so that $|\pi_1| \ge \cdots \ge |\pi_n|$.
For each $m\in \bN$, let $\sigma_1(\bA^m) \ge \cdots \ge \sigma_n(\bA^m)$ denote the singular values of $\bA^m$.
Then for any $1\le i \le n$, we have
\[
\lim_{m\to \infty} \sigma_i(\bA^m)^{1/m} = |\pi_i|.
\]
\end{lemma}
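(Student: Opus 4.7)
The plan is to apply Gelfand's spectral radius formula,
\[
\lim_{m\to\infty} \norm{\bB^m}^{1/m} = \rho(\bB),
\]
(where $\rho(\bB)$ denotes the spectral radius and $\norm{\cdot}$ is the operator norm) to the exterior powers of $\bA$, and then to extract the asymptotics of individual singular values by taking ratios of these products.

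Since the operator norm of a complex matrix equals its largest singular value, applying Gelfand's formula directly to $\bA$ yields $\lim_{m\to\infty}\sigma_1(\bA^m)^{1/m} = \rho(\bA) = |\pi_1|$, which settles the case $i = 1$. For general $i$, I would apply the same formula to the $k$-th exterior power $\wedge^k \bA$ acting on $\wedge^k \bC^n$. Standard linear algebra identifies the eigenvalues of $\wedge^k \bA$ as the products $\pi_{j_1}\cdots \pi_{j_k}$ with $j_1 < \cdots < j_k$, so $\rho(\wedge^k \bA) = |\pi_1 \cdots \pi_k|$, and likewise its singular values as the products $\sigma_{j_1}(\bA) \cdots \sigma_{j_k}(\bA)$, so $\norm{\wedge^k \bA} = \sigma_1(\bA) \cdots \sigma_k(\bA)$. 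Combined with the identity $(\wedge^k \bA)^m = \wedge^k(\bA^m)$, this gives
\[
\lim_{m\to\infty} \bigl( \sigma_1(\bA^m) \cdots \sigma_k(\bA^m) \bigr)^{1/m} = |\pi_1 \cdots \pi_k|, \quad \text{for each } 1 \le k \le n.
\]

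If all $\pi_j$ are nonzero, I would then divide the identity for $k$ by the one for $k-1$ (both limits being strictly positive) to obtain
\[
\lim_{m\to\infty} \sigma_k(\bA^m)^{1/m} = \frac{|\pi_1 \cdots \pi_k|}{|\pi_1 \cdots \pi_{k-1}|} = |\pi_k|.
\]
For the general case with zero eigenvalues, let $r$ be the largest index with $\pi_r \ne 0$ (with $r = 0$ when $\bA$ is nilpotent). For $i \le r$ the ratio argument still applies. For $i > r$ we have $|\pi_i| = 0$, and the displayed product identity with $k = r + 1$ forces $\prod_{j=1}^{r+1}\sigma_j(\bA^m)^{1/m} \to 0$; dividing by the strictly positive limit $\prod_{j=1}^r \sigma_j(\bA^m)^{1/m} \to |\pi_1 \cdots \pi_r|$ yields $\sigma_{r+1}(\bA^m)^{1/m} \to 0$, and then the monotonicity $\sigma_i(\bA^m) \le \sigma_{r+1}(\bA^m)$ for $i \ge r + 1$ finishes the argument.

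The only nontrivial technical input is the classical description of the eigenvalues and singular values of $\wedge^k \bA$ in terms of those of $\bA$, together with the fact that the operator norm coincides with the largest singular value. I expect the most delicate bookkeeping in the write-up to concern the zero eigenvalues, since a naive ratio argument would formally encounter $0/0$ once $\pi_r = 0$; isolating the cutoff index $r$ and exploiting monotonicity of singular values past it is precisely what circumvents this.
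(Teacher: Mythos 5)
Your argument is correct, and it takes a genuinely different route from the paper's. You prove the statement by passing to the exterior powers $\wedge^k\bA$: functoriality gives $(\wedge^k\bA)^m = \wedge^k(\bA^m)$, the spectral norm of $\wedge^k(\bA^m)$ is the product $\sigma_1(\bA^m)\cdots\sigma_k(\bA^m)$ of the $k$ largest singular values (via the SVD and the fact that $\wedge^k$ of a unitary map is unitary), and the spectral radius of $\wedge^k\bA$ is $|\pi_1\cdots\pi_k|$; one application of Gelfand's formula then pins down each partial product, and the ratios recover the individual $|\pi_i|$, with a short separate treatment of the zero eigenvalues. This is in fact the classical proof of what is usually known as Yamamoto's theorem. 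The paper instead performs a Schur triangularization $\bA = \bU\bT\bU^*$, observes that singular values are invariant under the unitary conjugation and that taking powers of an upper triangular matrix commutes with passing to its leading or trailing principal submatrix, handles $i=1$ and $i=n$ directly via Gelfand (applied to $\bT$ and $\bT^{-1}$), and then uses Cauchy's interlacing theorem for singular values together with induction on $n$ to trap the intermediate limits between the endpoint values. Your approach is shorter and more conceptual, avoiding the induction entirely; the paper's is more elementary in its toolkit (no exterior algebra) but has more moving parts. One small point in your favor: the paper dismisses the singular case with a one-line remark that treats only $\sigma_n$, whereas your cutoff index $r$ and the monotonicity argument handle repeated zero eigenvalues cleanly and completely. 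One small point to tighten in a write-up: when you say the ratio argument ``still applies'' for $i\le r$, spell out that $\lim_m(\sigma_1(\bA^m)\cdots\sigma_{i-1}(\bA^m))^{1/m}=|\pi_1\cdots\pi_{i-1}|$ is strictly positive because $i-1 < r$ implies $\pi_{i-1}\ne 0$; this is what licenses the division even when $\pi_i$ itself may vanish at $i=r+1$.
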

\begin{proof}
It suffices to consider the case when $\bA$ is an upper triangular matrix.
Indeed, let $\bA = \bU \bT \bU^*$ be the Schur decomposition of $\bA$ (see, e.g., \cite[Theorem 2.3.1]{HJ13}), where $\bU$ is unitary and $\bT$ is upper triangular with diagonal entries $\pi_1, \ldots, \pi_n$.
Then $\bA^m = \bU \bT^m \bU^*$ so that $\bA^m (\bA^m)^* = \bU \bT^m (\bT^m)^* \bU^*$ for any $m \in \bN$.
Hence, $\sigma_i(\bA^m) = \sigma_i(\bT^m)$ for any $i$.
We now assume that $\bA = \bT$ is upper triangular.
Without loss of generality, we may assume further that $\pi_n \ne 0$, i.e., $\bT$ is non-singular, since otherwise we would easily have $\sigma_n(\bT^m) = 0$.

We first prove that $\displaystyle \lim_{m\to \infty} \sigma_i(\bT^m)^{1/m} = |\pi_i|$ for $i = 1$ and $n$.
Note that $\sigma_1(\bT^m)$ is equal to the $2$-norm (also known as the spectral norm) $\|\bT^m\|_2$ of $\bT^m$ (see, e.g., \cite[Example~5.6.6]{HJ13}).
Thus the well-known spectral radius formula asserts that
\[
\lim_{m\to \infty} \sigma_1(\bT^m)^{1/m} = \lim_{m\to \infty} \big\|\bT^m\big\|_2^{1/m} = \rho(\bT) = |\pi_1|.
\]
On the other hand, we can repeat the above argument to the inverse $\bT^{-m}$ of $\bT^m$.
More precisely, we note that the largest eigenvalue of $(\bT^{-m})^* \bT^{-m}$, as the inverse of $\bT^m (\bT^{m})^*$, is just $\sigma_n(\bT^m)^{-2}$.
In other words, the largest singular value of $\bT^{-m}$, which coincides with the $2$-norm $\|\bT^{-m}\|_2$ of $\bT^{-m}$, is equal to $\sigma_n(\bT^m)^{-1}$.
It follows that
\[
\lim_{m\to \infty} \sigma_n(\bT^m)^{1/m} = \lim_{m\to \infty} \big\|\bT^{-m}\big\|_2^{-1/m} = \rho(\bT^{-1})^{-1} = |\pi_n|.
\]

In particular, the lemma has been proved when $n = 2$.
We shall prove the general case by induction on the matrix size $n$.
Let $n \ge 3$.
Suppose that the lemma holds for any matrix in $\Mat_{n-1}(\bC)$.
By the preceding discussion, it remains to consider the intermediate case $i=2, \ldots, n-1$.
First of all, let us fix the following notation: for a general matrix $\bM$, we use $\bM_{k}$ to denote the principal submatrix of $\bM$ obtained by removing both $k$-th row and $k$-th column from $\bM$.
Therefore, we can rewrite $\bT$ as follows:
\[
\bT = 
\begin{pmatrix}
\pi_1 & \bu_1^\sT \\
0 & \bT_1
\end{pmatrix}
\ \text{ or } \ 
\begin{pmatrix}
\bT_n & \bv_n \\
0 & \pi_n
\end{pmatrix},
\]
where $\bT_1$ (resp. $\bT_n$) is an upper triangular matrix with $\pi_2, \ldots, \pi_n$ (resp. $\pi_1, \ldots, \pi_{n-1}$) on the diagonal.
It is easy to verify that $\bT_1^m = (\bT^m)_1$ and $\bT_n^m = (\bT^m)_n$ for any $m$, since $\bT$ is upper triangular.
As usual, let $\sigma_1(\bT_1^m) \ge \cdots \ge \sigma_{n-1}(\bT_1^m)$ and $\sigma_1(\bT_n^m) \ge \cdots \ge \sigma_{n-1}(\bT_n^m)$ denote the singular values of $\bT_1^m$ and $\bT_n^m$, respectively.
Now, for each $m \in \bN$, applying Cauchy's interlacing theorem to the principal submatrix $\bT_1^m$ of $\bT^m$ (see, e.g., \cite[Theorem~4.3.17]{HJ13}), we get
\[
\sigma_1(\bT^m) \ge \sigma_1(\bT_1^m) \ge \sigma_2(\bT^m) \ge \sigma_2(\bT_1^m) \ge \cdots \ge \sigma_{n-1}(\bT^m) \ge \sigma_{n-1}(\bT_1^m) \ge \sigma_n(\bT^m).
\]
The induction hypothesis asserts that $\displaystyle \lim_{m\to \infty} \sigma_i(\bT_1^m)^{1/m} = |\pi_{i+1}|$ for any $1 \le i \le n-1$.
Taking the limsup of the $m$-th roots of the above sequences, we thus obtain that
\begin{equation}
\label{eq:limsup}
|\pi_2| \ge \limsup_{m\to \infty} \sigma_2(\bT^m)^{1/m} \ge |\pi_3| \ge \cdots \ge \limsup_{m\to \infty} \sigma_{n-1}(\bT^m)^{1/m} \ge |\pi_n|.
\end{equation}
Similarly, the same argument works for $\bT_n^m$, which yields the following inequalities
\[
\sigma_1(\bT^m) \ge \sigma_1(\bT_n^m) \ge \sigma_2(\bT^m) \ge \sigma_2(\bT_n^m) \ge \cdots \ge \sigma_{n-1}(\bT^m) \ge \sigma_{n-1}(\bT_n^m) \ge \sigma_n(\bT^m).
\]
It thus follows from the induction hypothesis that
\begin{equation}
\label{eq:liminf}
|\pi_1| \ge \liminf_{m\to \infty} \sigma_2(\bT^m)^{1/m} \ge |\pi_2| \ge \cdots \ge \liminf_{m\to \infty} \sigma_{n-1}(\bT^m)^{1/m} \ge |\pi_{n-1}|.
\end{equation}
Combining inequalities \eqref{eq:limsup} and \eqref{eq:liminf}, we have shown that $\displaystyle \lim_{m\to \infty} \sigma_i(\bT^m)^{1/m} = |\pi_i|$ for all intermediate $i = 2, \ldots, n-1$.
This concludes the proof of \cref{lemma:non-diag}.
\end{proof}

We are now ready to prove our main theorem on the comparison of the cohomological dynamical degrees with the numerical ones on abelian varieties, extending the main result of \cite{Hu19}.

\begin{proof}[Proof of \cref{thm:A}]
It suffices to consider the case when $f$ is a surjective endomorphism of $X$.
Indeed, any morphism between abelian varieties is a composite of a homomorphism with a translation (see, e.g., \cite[Corollary~2.2]{Milne86}).
Hence we can write $f$ as $t_a \circ \alpha$ for a surjective endomorphism $\alpha\in \End(X)$ and a translation $t_a$ for some $a\in X(\bk)$.
Note however that $t_a$ acts as identity on $H^1_{\et}(X, \bQ_\ell)$ and hence on $H^i_{\et}(X, \bQ_\ell)$ for all $i$.
It follows from the functoriality of the pullback map on $\ell$-adic \'etale cohomology that $\chi_i(f) = \chi_i(\alpha)$.
Similarly, we also get that $\lambda_k(f) = \lambda_k(\alpha)$ for all $k$.
So from now on, our $f = \alpha \in \End(X)$ is an isogeny.

Let $P_\alpha(t) \in \bZ[t]$ be the characteristic polynomial of $\alpha$.
By \cref{thm:B}, we can denote its $2g$ complex roots by $\pi_1, \ldots, \pi_g, \ol\pi_1, \ldots, \ol\pi_g$.
Without loss of generality, we may assume that
\[
|\pi_1| \ge \cdots \ge |\pi_g| > 0.
\]
It thus follows from \cref{thm:etale-coh} and the spectral radius formula that the $2k$-th cohomological dynamical degree of $\alpha$ is
\begin{equation}
\label{eq:chi}
\chi_{2k}(\alpha)_{\iota} = \prod_{i=1}^{k} |\pi_i|^2.
\end{equation}

We shall use \cref{cor:B} to compute $\lambda_k(\alpha)$ on the other side.
For each $m \in \bN$, let us first consider the characteristic polynomial $P_{(\alpha^m)^\dagger \circ \alpha^m}(t)$ of the symmetric element $(\alpha^m)^\dagger \circ \alpha^m$.
In virtue of \cref{prop:MilneII}, we have
\[
P_{\alpha}(t) = \prod_{j=1}^s \chi_{\alpha_j}^{\reduced}(t)^{m_j} \quad \text{and} \quad P_{(\alpha^m)^\dagger \circ \alpha^m}(t) = \prod_{j=1}^s \chi_{(\alpha_j^m)^\dagger \circ \alpha_j^m}^{\reduced}(t)^{m_j}.
\]
Note that by the definition, for each $j$, the reduced characteristic polynomial $\chi_{\alpha_j}^{\reduced}(t)$ is nothing but the characteristic polynomial of the corresponding matrix $\bA_{\alpha_j}$ of $\alpha_j \otimes_\bQ 1_\bC \in \End(X_j)_\bC$;
it is also the characteristic polynomial of $\alpha_j$ acting on the reduced representation $V_j^{\reduced}$.
Similarly, $\chi_{(\alpha_j^m)^\dagger \circ \alpha_j^m}^{\reduced}(t)$ is the characteristic polynomial of the Hermitian matrix $\bA_{(\alpha_j^m)^\dagger \circ \alpha_j^m} = (\bA_{\alpha_j}^m)^*\bA_{\alpha_j}^m$.
In particular, apart from multiplicities, the $\pi_i$ coincide with the eigenvalues of $\bA_\alpha = \bigoplus_j \bA_{\alpha_j}$,
and the roots of $P_{(\alpha^m)^\dagger \circ \alpha^m}(t)$ the eigenvalues of $(\bA_\alpha^m)^*\bA_\alpha^m$ (i.e., the squares of the singular values of $\bA_\alpha^m$).
Thus, without loss of generality, for each $m \in \bN$, we can denote the $2g$ real roots of $P_{(\alpha^m)^\dagger \circ \alpha^m}(t)$ by
$\sigma_1(\alpha^m)^2, \ldots, \sigma_g(\alpha^m)^2, \sigma_1(\alpha^m)^2, \ldots, \sigma_g(\alpha^m)^2$, where
\[
\sigma_1(\alpha^m) \ge \cdots \ge \sigma_g(\alpha^m) > 0
\]
coincide with the singular values of $\bA_\alpha^m$ (apart from multiplicities).

However, we note that for each $j$, the multiplicity $m_j$ depends only on $X_j$ (or rather, $X_j$ and $\End^0(X_j)$, but not the index $m$).
It thus follows from \cref{lemma:non-diag} that for each $1 \le i \le g$,
\begin{equation}
\label{eq:limit}
\lim_{m\to \infty} \sigma_i(\alpha^m)^{1/m} = |\pi_i|.
\end{equation}
According to \cref{lemma:unique}, the Albert polynomial of $(\alpha^m)^\dagger \circ \alpha^m$ can be written as
\[
P_{(\alpha^m)^\dagger \circ \alpha^m}^{\Albert}(t) = \prod_{i=1}^g (t - \sigma_i(\alpha^m)^2 ).
\]
Hence applying \cref{cor:B} to $\alpha^m$ yields that
\begin{equation}
\label{eq:sym-poly}
\binom{g}{k} \frac{\, (\alpha^m)^* H_X^k \cdot H_X^{g-k}\, }{H_X^g} = e_k ( \sigma_1(\alpha^m)^2, \ldots, \sigma_g(\alpha^m)^2 ),
\end{equation}
where $e_k$ is the $k$-th elementary symmetric polynomial.
Note that by \cite[Lemma~2.6]{Hu20a}, the $k$-th numerical dynamical degree $\lambda_k(\alpha)$ of $\alpha$ can be reinterpreted by the formula
\begin{equation}
\label{eq:lambda}
\lambda_k(\alpha) = \lim_{m\to \infty} \left( (\alpha^m)^* H_X^k \cdot H_X^{g-k} \right)^{1/m};
\end{equation}
see also \cite[Theorems 1 and 2]{Dang20} and \cite[Theorem~1.1(1)]{Truong20} for more general cases of dominant rational maps and dominant correspondences, respectively.

Putting together all \cref{eq:sym-poly,eq:lambda,eq:limit,eq:chi}, we have that
\begin{align*}
\lambda_k(\alpha) &= \lim_{m\to \infty} \left( e_k( \sigma_1(\alpha^m)^2, \ldots, \sigma_g(\alpha^m)^2 ) \right)^{1/m} \\[3pt]
&= \max_{1\le i_1 < \ldots < i_k \le g} \, \lim_{m\to \infty} \sigma_{i_1}(\alpha^m)^{2/m} \cdots  \lim_{m\to \infty} \sigma_{i_k}(\alpha^m)^{2/m} \\[3pt]
&= \max_{1\le i_1 < \ldots < i_k \le g} \, |\pi_{i_1}|^2 \cdot |\pi_{i_2}|^2 \cdots |\pi_{i_k} |^2 \\[3pt]
&= |\pi_1|^2 \cdot |\pi_2|^2 \cdots |\pi_k|^2 \\
&= \chi_{2k}(\alpha)_{\iota}.
\end{align*}
We thus complete the proof of \cref{thm:A}.
\end{proof}

\begin{remark}
\label{rmk:A2}
With notation as above, one can easily verify that if $i = 2k$ is even, then 
\[
\chi_i(\alpha)_{\iota}^2 = \prod_{j=1}^{k} |\pi_j|^4 = \lambda_{k}(\alpha)^2 = \max_{p+q=2k} \lambda_p(\alpha) \lambda_q(\alpha);
\]
and if $i = 2k - 1$ is odd, then
\[
\chi_i(\alpha)_{\iota}^2 = |\pi_k|^2 \cdot \prod_{j=1}^{k-1} |\pi_j|^4= \lambda_{k-1}(\alpha) \lambda_{k}(\alpha) = \max_{p+q=2k-1} \lambda_p(\alpha) \lambda_q(\alpha).
\]
This yields an analog of Dinh's inequality and hence answers \cite[Question~4]{Truong} in the case of abelian varieties.
\end{remark}

In the end, as a by-product, we deduce the following norm comparison for self-morphisms of abelian varieties, which motivates the so-called norm comparison conjecture considered in \cite{HT}.
Note that by the triangle inequality, the same also holds for any correspondence $f\coloneqq \sum_{i=1}^n a_i \Gamma_{f_i}$, where $a_i\in \bQ_{>0}$ and $\Gamma_{f_i}$ is the graph of a self-morphism $f_i$ of $X$.

\begin{corollary}
\label{cor:norm}
Let $X$ be an abelian variety of dimension $g$ over $\bk$.
Then there exists a positive constant $C>0$ such that for any $0\le k\le g$ and for any self-morphism $f$ of $X$, we have
\[
\big\|f^*|_{H^{2k}_{\emph\et}(X, \bQ_\ell)}\big\|_{\iota} \le C \, \big\|f^*|_{\N^k(X)_\bR}\big\|.
\]
\end{corollary}
\begin{proof}
It suffices to consider the case when $f = \alpha$ is an endomorphism of $X$.
Also, by the equivalence of norms on the finite-dimensional vector spaces, we are free to choose any norms.
It is well-known that the spectral norm $\|\alpha^*|_{\N^k(X)_\bR}\|_2$ of $\alpha^*|_{\N^k(X)_\bR}$ is equal to $\sigma_1(\alpha^*|_{\N^k(X)_\bR})$, the largest singular value of $\alpha^*|_{\N^k(X)_\bR}$.
The latter turns out to be the square root of the spectral radius $\rho((\alpha^{\dagger}\circ \alpha)^*|_{\N^k(X)_\bR})$,
since $(\alpha^{\dagger})^*|_{\N^k(X)_\bR}$ is represented by the transpose of $\alpha^*|_{\N^k(X)_\bR}$ which is defined over $\bZ$.
It thus follows from \cref{thm:A} that
\begin{equation}
\label{eq:2-norm-on-Nk}
\big\|\alpha^*|_{\N^k(X)_\bR}\big\|_2 = \rho( (\alpha^{\dagger}\circ \alpha)^*|_{\N^k(X)_\bR} )^{1/2} = \rho( (\alpha^{\dagger}\circ \alpha)^*|_{H^{2k}_{\et}(X, \bQ_\ell)} )^{1/2}.
\end{equation}

We shall prove that the right-hand side of \cref{eq:2-norm-on-Nk} also gives a norm of $\alpha^*|_{H^{2k}_{\et}(X, \bQ_\ell)}$.
Notice that by \cref{thm:etale-coh}, one has
\[
\alpha^*|_{H^{2k}_{\et}(X, \bQ_\ell)} = \bigwedge\nolimits^{\! 2k} \alpha^*|_{H^{1}_{\et}(X, \bQ_\ell)} \isom \bigwedge\nolimits^{\! 2k} V_\ell(\alpha)^\vee |_{V_\ell(X)^\vee}.
\]
On the other hand, by \cref{prop:MilneII}, we have the following decomposition of $V_\ell(X) \otimes_{\bQ_\ell} \ol{\bQ}_\ell$ as a representation of $\End^0(X)$ (for brevity, here we suppress the multiplicity $m_j$):
\[
V_\ell(X) \otimes_{\bQ_\ell} \ol{\bQ}_\ell \isom \bigoplus_{j} V_j^{\reduced} \otimes_{\ol{\bQ}} \ol{\bQ}_\ell,
\]
which yields that
\[
V_\ell(\alpha) |_{V_\ell(X) \otimes_{\bQ_\ell} \ol{\bQ}_\ell} \isom \alpha |_{\bigoplus_{j} V_j^{\reduced} \otimes_{\ol{\bQ}} \ol{\bQ}_\ell} \isom \bigoplus_{j} \alpha_j |_{V_j^{\reduced} \otimes_{\ol{\bQ}} \ol{\bQ}_\ell}.
\]
For each $j$, we know that $\alpha_j|_{V_j^{\reduced}}$ is represented by a matrix $\bA_{\alpha_j} \in \Mat_{e_jd_jn_j}(\ol{\bQ})$; further, viewed as a complex matrix, $\bA_{\alpha_j}$ is nothing but the corresponding matrix of $\alpha_j \otimes_\bQ 1_\bC \in \End(X_j)_\bC$.
Then we see from \cref{thm:NS} that $\alpha_j^{\dagger}|_{V_j^{\reduced}}$ is represented by the Hermitian transpose $\bA_{\alpha_j}^*$ of $\bA_{\alpha_j}$.
It follows that the linear maps $\alpha^*|_{H^{2k}_{\et}(X, \bQ_\ell)}$ and $(\alpha^\dagger \circ \alpha)^*|_{H^{2k}_{\et}(X, \bQ_\ell)}$ could be represented by the complex matrices
\[
\bigwedge\nolimits^{\! 2k} \bigoplus_{j} \bA_{\alpha_j}^\sT \quad \mbox{and} \quad
\bigwedge\nolimits^{\! 2k} \bigoplus_{j} \ol\bA_{\alpha_j} \bA_{\alpha_j}^\sT = \bigg(\bigwedge\nolimits^{\! 2k} \bigoplus_{j} \bA_{\alpha_j}^\sT \bigg)^* \cdot \bigg(\bigwedge\nolimits^{\! 2k} \bigoplus_{j} \bA_{\alpha_j}^\sT \bigg),
\]
respectively.
This thus yields that the spectral radius of $(\alpha^{\dagger}\circ \alpha)^*|_{H^{2k}_{\et}(X, \bQ_\ell)}$ is just the square of the largest singular value of $\alpha^*|_{H^{2k}_{\et}(X, \bQ_\ell)}$.
In other words, we show that
\[
\rho( (\alpha^{\dagger}\circ \alpha)^*|_{H^{2k}_{\et}(X, \bQ_\ell)} )^{1/2} = \sigma_1(\alpha^*|_{H^{2k}_{\et}(X, \bQ_\ell)}) = \big\|\alpha^*|_{H^{2k}_{\et}(X, \bQ_\ell)}\big\|_2.
\]
This concludes the proof of \cref{cor:norm}.
\end{proof}

\begin{remark}
It is worth mentioning that in \cite{Zarhin20}, Zarhin proves that for an endomorphism $\alpha$ of an abelian variety $X$ of dimension $g$ over $\bk$, the matrix representing $V_\ell(\alpha)|_{V_\ell(X)}$ (or equivalently, $\alpha^*|_{H^{1}_{\et}(X, \bQ_\ell)}$), lies in $\Mat_{2g}(\bQ)$, under an appropriate basis of $V_\ell(X)$.
Later, Poonen and Rybakov \cite{PR21} refine his result to integer matrices.
Hence the above $2$-norm argument applies to the pullback action on \'etale cohomology as well, which immediately shows that the right-hand side of \cref{eq:2-norm-on-Nk} gives a norm of $\alpha^*|_{H^{2k}_{\et}(X, \bQ_\ell)}$.
\end{remark}


\phantomsection
\addcontentsline{toc}{section}{Acknowledgments}
\noindent
\textbf{Acknowledgments. }
I am greatly obliged to Dragos Ghioca and Zinovy Reichstein for their support during my stay at the University of British Columbia.
I would like to thank H\'el\`ene Esnault, Nathan Grieve, Bruno Kahn, Nicholas Katz, Zinovy Reichstein, Tuyen Truong, Yuri Zarhin, Yishu Zeng, and De-Qi Zhang for stimulating discussions and valuable suggestions.
I would also like to thank Pierre Deligne for kindly explaining to me that our \cref{thm:B} is actually a consequence of the standard conjectures.
My special thanks goes to Tuyen Truong for helpful comments and for pointing out several inaccuracies in an earlier version.
Finally, I am grateful to the referee for his/her many helpful and invaluable suggestions.



\bibliographystyle{amsalpha}
\bibliography{../mybib}

\end{document}